\newcommand{\deletethis}[1]{{}}
\numberwithin{theorem}{section}
\theoremstyle{remark}
\newtheorem{remark}[theorem]{Remark}
\newcommand{\TheTitle}{Sum-of-Squares Optimization Without Semidefinite Programming}
\newcommand{\TheAuthors}{D{\'a}vid PAPP and Sercan YILDIZ}
\headers{\TheTitle}{\TheAuthors}
\title{{\TheTitle}\thanks{Original manuscript: December 5, 2017. Revised on Thursday June 14, 2018 and on \today. \textbf{This is a technical report of the same title as the manuscript accepted for publication in the SIAM Journal on Optimization. Due to the journal's page limit, Section 7.3 of this report is omitted in the journal version.}		
\funding{This material is based upon work supported by the National Science Foundation under Grant No.~DMS-1719828. Additionally, this material was based upon work partially supported by the National Science Foundation under Grant No.~DMS-1638521 to the Statistical and Applied Mathematical Sciences Institute. Any opinions, findings, and conclusions or recommendations expressed in this material are those of the authors and do not necessarily reflect the views of the National Science Foundation.}}
}
\author{
  D{\'a}vid PAPP\thanks{North Carolina State University, Department of Mathematics. Email: \email{dpapp@ncsu.edu}.}
  \and
  Sercan YILDIZ\thanks{University of North Carolina at Chapel Hill, Department of Statistics and Operations Research; Statistical and Applied Mathematical Sciences Institute.}
}
\DeclareMathOperator*{\minimize}{minimize}
\DeclareMathOperator*{\maximize}{maximize}
\DeclareMathOperator*{\st}{subject\;to}
\DeclareMathOperator*{\cond}{cond}
\DeclareMathOperator*{\diag}{diag}
\DeclareMathOperator*{\tr}{tr}
\let\vec\relax \DeclareMathOperator*{\vec}{vec}
\newcommand{\defeq}{\ensuremath{\overset{\mathrm{def}}{=}}}
\newcommand{\vA}{\mathbf{A}}
\newcommand{\vb}{\mathbf{b}}
\newcommand{\vB}{\mathbf{B}}
\newcommand{\vc}{\mathbf{c}}
\newcommand{\vC}{\mathbf{C}}
\newcommand{\vd}{\mathbf{d}}
\newcommand{\ve}{\mathbf{e}}
\newcommand{\vE}{\mathbf{E}}
\newcommand{\vg}{\mathbf{g}}
\newcommand{\vG}{\mathbf{G}}
\newcommand{\vI}{\mathbf{I}}
\newcommand{\vell}{\bm{\ell}}
\newcommand{\vL}{\mathbf{L}}
\newcommand{\vLambda}{\bm{\Lambda}}
\newcommand{\vp}{\mathbf{p}}
\newcommand{\vq}{\mathbf{q}}
\newcommand{\vP}{\mathbf{P}}
\newcommand{\vQ}{\mathbf{Q}}
\newcommand{\vr}{\mathbf{r}}
\newcommand{\vR}{\mathbf{R}}
\newcommand{\vs}{\mathbf{s}}
\newcommand{\vS}{\mathbf{S}}
\newcommand{\vt}{\mathbf{t}}
\newcommand{\vu}{\mathbf{u}}
\newcommand{\vv}{\mathbf{v}}
\newcommand{\vV}{\mathbf{V}}
\newcommand{\vw}{\mathbf{w}}
\newcommand{\vx}{\mathbf{x}}
\newcommand{\vy}{\mathbf{y}}
\newcommand{\vX}{\mathbf{X}}
\newcommand{\vz}{\mathbf{z}}
\newcommand{\vzero}{\mathbf{0}}
\newcommand{\vone}{\mathbf{1}}
\newcommand{\R}{\mathbb{R}}
\newcommand{\real}{\mathbb{R}}
\newcommand{\bS}{\mathbb{S}}
\newcommand{\T}{\mathrm{T}}
\newcommand{\cB}{\mathcal{B}}
\newcommand{\cC}{\mathcal{C}}
\newcommand{\cD}{\mathcal{D}}
\newcommand{\cF}{\mathcal{F}}
\newcommand{\cK}{\mathcal{K}}
\newcommand{\cN}{\mathcal{N}}
\newcommand{\Oh}{\mathcal{O}}
\newcommand{\cP}{\mathcal{P}}
\newcommand{\cS}{\mathcal{S}}
\newcommand{\cT}{\mathcal{T}}
\newcommand{\cV}{\mathcal{V}}
\newcommand{\dop}{\mathrm{d}}
\newcommand{\lambdamin}{\lambda_\textup{min}}
\newcommand{\lambdamax}{\lambda_\textup{max}}
\newcommand{\af}{\alpha}
\newcommand{\bx}{\bar{\vx}}
\newcommand{\bs}{\bar{\vs}}
\newcommand{\bF}{\bar{F}}
\newcommand{\bg}{\bar{g}}
\newcommand{\bH}{\bar{H}}
\newcommand{\bK}{\bar{\cK}}
\newcommand{\bnu}{\bar{\nu}}
\newcommand{\dx}{\Delta_{\bx}}
\newcommand{\dy}{\Delta_{\vy}}
\newcommand{\ds}{\Delta_{\bs}}
\newcommand{\dz}{\Delta_{\vz}}
\newcommand{\PPTsq}{\ensuremath{(\vP\vP^\T)^{\circ2}}}
\newcommand{\revise}[1]{{#1}}
\begin{document}

\maketitle

\begin{abstract}
We propose a homogeneous primal-dual interior-point method to solve sum-of-squares optimization problems by combining non-symmetric conic optimization techniques and polynomial interpolation. The approach optimizes directly over the sum-of-squares cone and its dual, circumventing the semidefinite programming (SDP) reformulation which requires a large number of auxiliary variables \revise{when the degree of sum-of-squares polynomials is large}. As a result, it has substantially lower theoretical time and space complexity than the conventional SDP-based approach. Although our approach avoids the semidefinite programming reformulation, an optimal solution to the semidefinite program can be recovered with little additional effort. Computational results confirm that \revise{the proposed method is several orders of magnitude faster than the SDP-based approach for optimization problems over high-degree sum-of-squares polynomials}.
\end{abstract}

\begin{keywords}
  sum-of-squares optimization, non-symmetric conic optimization, polynomial interpolation, polynomial optimization, semidefinite programming
\end{keywords}

\begin{AMS}
  90C25, 90C51, 65D05, 90C22
\end{AMS}

\section{Introduction}\label{sec:introduction}

We propose a homogeneous primal-dual interior-point algorithm for sum-of-squares optimization. Our approach is applicable to optimization problems over products of sum-of-squares cones, which include the optimization of polynomials over basic semialgebraic sets, moment problems, and parametric sum-of-squares problems. 
These problems are fundamental in many areas of applied mathematics and engineering, including discrete geometry 
\cite{BachocVallentin2008,BallingerBlekhermanCohnGiansiracusaKellySchurmann2009}, 
probability theory \cite{BertsimasPopescu2005}, control theory 
\cite{HessHenrionLasserrePham2016}, 
signal processing \cite{Dumitrescu2017},
power systems engineering 
\cite{GhaddarMarecekMevissen2016}, computational algebraic geometry 
\cite{Laurent2009,KaltofenLiYangZhi2008},
design of experiments 
\cite{Papp2012}, and statistical estimation \cite{AlizadehPapp2013}.
Additional applications of sum-of-squares optimization are described in \cite{BlekhermanParriloThomas2013}.

In the simplest form of polynomial optimization, we are given $n$-variate polynomials $g_1,\ldots,g_m$ and $f$ over the reals, and we are interested in determining the minimum value of $f$ on the basic closed semialgebraic set
\begin{equation}\label{eq:domain}
\cS\defeq\left\{\vt\in\R^n\,|\, g_i(\vt)\geq 0\quad\forall\,i=1,\ldots,m\right\}.
\end{equation}
That is to say, we would like to compute
\begin{equation}\label{eq:poly-opt}
\inf_{\vt\in\R^n} \left\{f(\vt)\,|\,\vt\in\cS\right\}.
\end{equation}
Equivalently, one may seek the largest constant $c\in\R$ which can be subtracted from $f$ such that $f-c$ is nonnegative on the set $\cS$. Thus, the polynomial optimization problem \eqref{eq:poly-opt} can be reduced to the problem of checking polynomial nonnegativity.

In many of the applications mentioned above, the goal is not to simply compute the minimum value of a \emph{given} polynomial, but rather to \emph{find} an optimal polynomial satisfying certain \emph{shape constraints} that impose bounds on certain linear functionals of the polynomial. In this setting, even the case of polynomials with only a few variables is of great interest; in fact, most of the references cited above are concerned with univariate polynomials of high degree.	
	

These problems are most naturally formulated as conic optimization problems: Let $\cK\subset\R^N$ be a closed and convex cone. A \emph{conic optimization problem} is a problem of the form
\begin{equation}\label{eq:CP-P}
\begin{aligned}
&\minimize_{\vx\in\R^N}\quad    && \vc^\T\vx\\
&\st\;\;                        && \vA\vx = \vb\\
&&& \vx\in\cK
\end{aligned}
\end{equation}
where $\vA$ is a $k\times N$ real matrix, and $\vc$ and $\vb$ are real vectors of appropriate dimensions. Its dual problem is
\begin{equation}\label{eq:CP-D}
\begin{aligned}
&\maximize_{\vy\in\R^k,\,\vs\in\R^N}\quad   && \vb^\T\vy\\
&\st\;\;                                    && \vA^\T\vy + \vs = \vc\\
&&& \vs\in\cK^*.
\end{aligned}
\end{equation}
Here $\cK^*\defeq\{\vs\in\R^N\,|\,\vs^\T\vx\geq 0\;\forall\,\vx\in\cK\}$ denotes the dual cone of $\cK$, which is also closed and convex. In the literature on conic optimization, it is usually assumed that the cone $\cK$ is pointed and has nonempty interior in addition to being closed and convex; in this case, $\cK$ is called a \emph{proper} cone. When $\cK$ is proper, its dual cone $\cK^*$ is also proper.

In optimization problems involving polynomials, we are typically interested in the space of $n$-variate polynomials of total degree at most $r$, which we denote with $\cV_{n,r}$ in this paper, and the closed convex cone $\cP_{n,r}^{\cS}$ of polynomials that are nonnegative on $\cS$:
\[ \cP_{n,r}^{\cS} \defeq \left\{ p\in\cV_{n,r}\,|\, p(\vt)\geq 0 \quad\forall\, \vt\in\cS \right\}. \]
In the case $\cS=\R^n$, we use the lighter notation $\cP_{n,r}$ to represent the cone of polynomials that are nonnegative everywhere. The dual cone of $\cP_{n,r}^{\cS}$ is known as the \emph{moment cone} corresponding to $\cS$.

Throughout the paper, all polynomials are $n$-variate polynomials over the real number field. The degree of a polynomial is always understood in the sense of total degree, and all vectors are interpreted as column vectors unless stated otherwise. We represent vectors and matrices in boldface type to distinguish them from scalars. We let $\vzero$ and $\vone$ denote the all-zeros and all-ones vectors, and we let $\ve_i$ denote the $i$-th standard unit vector whose only nonzero entry is at the $i$-th position and equal to 1. We represent the arguments of an $n$-variate polynomial with $\vt=(t_1,\ldots,t_n)$ when necessary. We let $\cC^\circ$ denote the interior of a set $\cC\subset\R^N$.

\subsection{Sum-of-squares polynomials: basic definitions and notation}

A polynomial $p\in\cV_{n,2d}$ is said to be \emph{sum-of-squares} (SOS) if it can be expressed as a finite sum of squared polynomials. More precisely, the polynomial $p\in\cV_{n,2d}$ is SOS if there exist $q_1,\ldots,q_M\in\cV_{n,d}$ such that $p = \sum_{j=1}^M q_j^2$. We let $\Sigma_{n,2d}$ denote the set consisting of $n$-variate SOS polynomials of degree $2d$. This set is a proper cone in $\cV_{n,2d}$ \cite[Thm. 17.1]{Nesterov2000}. Let $\vg\defeq(g_1,\ldots,g_m)$ and $\vd\defeq(d_1,\ldots,d_m)$ for some given nonzero polynomials $g_1,\ldots,g_m$ and nonnegative integers $d_1,\ldots,d_m$. Consider the space $\cV_{n,2\vd}^\vg$ of polynomials $p$ for which there exist $r_1\in\cV_{n,2d_1},\ldots,r_m\in\cV_{n,2d_m}$ such that $p = \sum_{i=1}^m g_i r_i$.
A polynomial $p\in\cV_{n,2\vd}^\vg$ is said to be \emph{weighted sum-of-squares} (WSOS) if there exist $s_1\in\Sigma_{n,2d_1},\ldots,s_m\in\Sigma_{n,2d_m}$ such that $p = \sum_{i=1}^m g_i s_i$. We let $\Sigma_{n,2\vd}^\vg$ denote the set consisting of these WSOS polynomials. This set is a convex cone with nonempty interior in $\cV_{n,2\vd}^\vg$, but it is not always closed or pointed. Proposition~\ref{thm:WSOSprops} below characterizes when $\Sigma_{n,2\vd}^\vg$ is a proper cone.
An \emph{SOS optimization problem} is a conic optimization problem where the underlying cone is a Cartesian product of SOS and WSOS cones. For simplicity, we limit our initial presentation in this paper to optimization problems over SOS cones, and discuss the more general case of optimization over WSOS cones in Section~\ref{sec:WSOS}.

Let $L\defeq\dim\cV_{n,d} = {n+d \choose n}$ and $U\defeq\dim\cV_{n,2d}={n+2d \choose n}$ denote the dimensions of the spaces of $n$-variate polynomials of degree at most $d$ and $2d$, respectively.
The space $\cV_{n,2d}$ is isomorphic to $\R^U$; therefore, $\Sigma_{n,2d}$ can equivalently be seen as a cone in $\R^U$. In view of this connection, given an ordered basis $\vq = (q_1,\ldots,q_U)$ of $\cV_{n,2d}$, we say that a vector $\vs=(s_1,\ldots,s_U)\in\R^U$ satisfies $\vs\in\Sigma_{n,2d}$ if the polynomial $\sum_{u=1}^Us_uq_u$ is SOS.
We let $\bS^L$ denote the space of $L\times L$ real symmetric matrices and let $\bS^L_+$ (resp. $\bS^L_{++}$) denote the cone of positive semidefinite (resp. positive definite) matrices in the same space. When the size of the matrices is clear from the context, we write $\vX\succcurlyeq\vzero$ (resp. $\vX\succ\vzero$) to mean that the real symmetric matrix $\vX$ is positive semidefinite (resp. positive definite). For matrices $\vS,\vX\in\bS^L$, the notation $\vS\bullet\vX=\sum_{i=1}^L\sum_{j=1}^L\vS_{ij}\vX_{ij}$ represents the Frobenius inner product of $\vS$ and $\vX$, and $\|\vX\|_F=\sqrt{\vX\bullet\vX}$ represents the Frobenius norm of $\vX$.

An SOS decomposition provides a simple certificate demonstrating the global nonnegativity of a polynomial. The key observation behind modern polynomial optimization approaches is that while deciding whether a polynomial is nonnegative is NP-hard (outside of a few very special cases), the cone of SOS polynomials admits a semidefinite representation \cite{ParriloThesis2000,Nesterov2000,Lasserre2001,Laurent2009}. Using this representation, optimization problems over SOS cones can be reformulated as semidefinite programming (SDP) problems. The following theorem is due to Nesterov \cite{Nesterov2000}; we present it here in our notation for completeness.

\begin{proposition}[{\cite[Thm. 17.1]{Nesterov2000}}]\label{thm:Nesterov2000}
Fix ordered bases $\vp = (p_1,\ldots,p_L)$ and $\vq = (q_1,\ldots,q_U)$ of $\cV_{n,d}$ and $\cV_{n,2d}$, respectively. Let $\Lambda:\R^U\to\bS^L$ be the unique linear mapping satisfying $\Lambda(\vq)=\vp\vp^\T$, and let $\Lambda^*$ denote its adjoint. Then $\vs\in\Sigma_{n,2d}$ \deletethis{the polynomial $\sum_{u=1}^Us_uq_u$ belongs to the cone $\Sigma_{n,2d}$} if and only if there exists a matrix $\vS\succcurlyeq\vzero$ satisfying
\begin{equation*}\label{eq:Sigma}
\vs = \Lambda^*(\vS).
\end{equation*}
Additionally, the dual cone of $\Sigma_{n,2d}$ admits the characterization
\begin{equation*}\label{eq:Sigma*}
\Sigma_{n,2d}^* = \left\{\vx\in\real^U\,|\,\Lambda(\vx)\succcurlyeq\vzero\right\}.
\end{equation*}
\end{proposition}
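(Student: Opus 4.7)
The plan is to first unpack the definition of $\Lambda$ and extract a concrete description of its adjoint $\Lambda^*$. I would read the identity $\Lambda(\vq)=\vp\vp^\T$ as the formal polynomial-valued matrix identity $\sum_{u=1}^U q_u\,\Lambda(\ve_u)=\vp\vp^\T$, which uniquely determines each $\Lambda(\ve_u)\in\bS^L$ by expanding the entries $p_ip_j\in\cV_{n,2d}$ in the basis $\vq$. Taking the Frobenius inner product of both sides with an arbitrary $\vS\in\bS^L$ and using $(\Lambda^*(\vS))_u=\vS\bullet\Lambda(\ve_u)$ yields the working identity
\[ \vp^\T\vS\vp \;=\; \vS\bullet(\vp\vp^\T) \;=\; \sum_{u=1}^{U} q_u\,(\Lambda^*(\vS))_u, \]
so $\Lambda^*(\vS)$ is precisely the coordinate vector of the quadratic form $\vp^\T\vS\vp$ in the basis $\vq$. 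This identification is the only non-routine step; everything else is a direct consequence.

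For the first claim I would prove both directions together using this characterization. If $\vs\in\Sigma_{n,2d}$, there exist $q_1,\ldots,q_M\in\cV_{n,d}$ with $\sum_u s_u q_u=\sum_{j=1}^M q_j^2$. Writing each $q_j=\vv_j^\T\vp$ in the basis $\vp$, the right-hand side equals $\vp^\T\vS\vp$ with $\vS\defeq\sum_j \vv_j\vv_j^\T\succcurlyeq\vzero$; uniqueness of coordinates in the basis $\vq$ then forces $\vs=\Lambda^*(\vS)$. Conversely, given $\vS\succcurlyeq\vzero$ with $\vs=\Lambda^*(\vS)$, factor $\vS=\vV\vV^\T$ (for instance, by an eigendecomposition or Cholesky); then $\sum_u s_u q_u=\vp^\T\vS\vp=\sum_j(\vv_j^\T\vp)^2$ exhibits an explicit SOS decomposition, so $\vs\in\Sigma_{n,2d}$.

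The dual characterization follows by combining the first part with the self-duality of the PSD cone. By definition, $\vx\in\Sigma_{n,2d}^*$ iff $\vx^\T\vs\ge 0$ for every $\vs\in\Sigma_{n,2d}$; by the first part, this is equivalent to $\vx^\T\Lambda^*(\vS)\ge 0$ for every $\vS\succcurlyeq\vzero$. The adjoint relation $\vx^\T\Lambda^*(\vS)=\Lambda(\vx)\bullet\vS$ turns this into $\Lambda(\vx)\bullet\vS\ge 0$ for all $\vS\succcurlyeq\vzero$, which by self-duality of $\bS^L_+$ under the Frobenius inner product is exactly $\Lambda(\vx)\succcurlyeq\vzero$. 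The main obstacle is purely notational: the identity $\Lambda(\vq)=\vp\vp^\T$ mixes a linear map on $\R^U$ with polynomial-valued matrices, so one has to commit to the formal interpretation above before proceeding. Once $\Lambda^*$ is recognized as ``coordinates of $\vp^\T\vS\vp$ in the basis $\vq$'', both halves of the proposition reduce to standard linear-algebraic manipulations.
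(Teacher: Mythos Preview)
Your proof is correct. The paper does not actually provide a proof of this proposition---it is quoted from \cite[Thm.~17.1]{Nesterov2000} and stated ``for completeness'' without argument---so there is nothing in the paper to compare against. Your approach (identifying $\Lambda^*(\vS)$ as the coordinate vector of $\vp^\T\vS\vp$ in the basis $\vq$, then reducing both claims to the spectral factorization of PSD matrices and the self-duality of $\bS^L_+$) is the standard one and goes through cleanly. One cosmetic point: you reuse the symbols $q_1,\ldots,q_M$ for the squared terms in the SOS decomposition, which clashes with the basis $\vq=(q_1,\ldots,q_U)$; renaming those to, say, $r_1,\ldots,r_M$ would avoid confusion.
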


We emphasize that the operator $\Lambda$ in Proposition~\ref{thm:Nesterov2000} depends explicitly on the specific bases $\vp$ and $\vq$ chosen to represent $\cV_{n,d}$ and $\cV_{n,2d}$. In particular, these bases determine which linear slice of the positive semidefinite cone is used to characterize $\Sigma_{n,2d}^*$. For instance, a standard choice in practice is to choose $\vp$ and $\vq$ as monomials up to degree $d$ and $2d$ respectively; with this choice and $n=1$, the operator $\Lambda$ becomes the mapping from $\vx$ to its Hankel matrix $(x_{i+j})_{i,j=0,\ldots,d}$. We shall explore the impact of the choice of $\vp$ and $\vq$ in more detail in Section~\ref{sec:barriers}.

Nesterov \cite{Nesterov2000} also gave an analogous semidefinite representation for WSOS cones. We postpone the precise statement of this result to Proposition~\ref{thm:Nesterov2000WSOS} below and mention here only that with $m$ polynomial weights, the semidefinite representation of $\Sigma_{n,2\vd}^\vg$ requires $m$ positive semidefinite matrices of orders ${n+d_1\choose n},\ldots,{n+d_m\choose n}$ respectively. In the characterization of the dual cone, this translates to $m$ linear matrix inequalities.

Now one can invoke a \emph{Positivstellensatz} result such as those of Putinar \cite{Putinar1993}, Schm{\"u}dgen \cite{Schmudgen1991}, Handelman \cite{Handelman1988}, or P{\'o}lya \cite[p.57]{HardyLittlewoodPolya1934} to conclude that under certain conditions on the polynomials $g_1,\ldots,g_m$, every polynomial that is nonnegative (or strictly positive) on $\cS$ is WSOS with respect to polynomial weights that are constructed from $g_1,\ldots,g_m$ and that are trivially nonnegative on $\cS$. Once the weights and the degrees of the polynomials to be squared have been fixed, the resulting WSOS cone is an inner approximation of $\cP_{n,r}^\cS$. This leads to a hierarchy of semidefinite programs parameterized with increasing degrees of squared polynomials (and possibly increasingly larger sets of weights). For optimization problems over cones of nonnegative polynomials, each level of this hierarchy provides increasingly better primal bounds, and the convergence of these bounds to the optimal value is guaranteed by the \emph{Positivstellensatz} result invoked.

These connections between nonnegative polynomials, sums of squares, and semidefinite programming were first made in \cite{Nesterov2000,ParriloThesis2000,Lasserre2001}; see \cite{Laurent2009} for a comprehensive review. The textbooks \cite{Marshall2008,BlekhermanParriloThomas2013,Lasserre2015} provide an excellent introduction to these techniques and highlight their connections to different areas of pure and applied mathematics.
The review \cite{deKlerk2008} places these results in the context of complexity theory with an overview of hardness theorems and approximation schemes for polynomial optimization on standard domains.

There are several implementations available (mostly in the form of Matlab packages) for the numerical solution of polynomial optimization problems using SOS theory. These include SOSTOOLS \cite{sostools}, GloptiPoly \cite{gloptipoly}, SparsePOP \cite{sparsepop}, SPOT \cite{spot}, GpoSolver \cite{gposolver}, SOSOPT \cite{sosopt}. All of these implementations rely on the SDP-based approach outlined above.

\subsection{Complexity of the semidefinite representation}\label{sec:complexity}

While Proposition~\ref{thm:Nesterov2000} shows that the SOS cone and its dual are semidefinite representable, this representation is rather inefficient. The cone $\Sigma_{n,2d}^*$ is characterized as a $U$-dimensional linear slice of the cone of $L\times L$ positive semidefinite matrices, and similarly $\Sigma_{n,2d}$ is represented as a linear image of the same positive semidefinite cone. The computational implication is that $\Theta(L^2)$ variables are needed to represent a $U$-dimensional vector belonging to $\Sigma_{n,2d}$. This leads to a substantial increase in the number of variables when dealing with optimization problems over these cones. For instance, in the case $n=1$, the parameters $L$ and $U$ are equal to $d+1$ and $2d+1$ respectively, and while the cone $\Sigma_{1,2d}$ has dimension $2d+1$, its semidefinite representation requires $d(d+1)/2$ variables. This has a significant effect on the time complexity of optimization over $\Sigma_{n,2d}$ and its dual. Standard primal-dual interior-point methods for semidefinite programming have an $\Oh(L^{6.5})$ running time for problems with a single $L\times L$ matrix variable, as their iteration complexity is $\Oh(L^{0.5})$ (see, e.g., \cite{MonteiroTodd2000}) and each iteration requires the solution of a linear system in $\Theta(L^2)$ variables (which costs $\Oh(L^6)$ arithmetic operations).

The situation is even worse for optimization problems over WSOS cones: The size of the semidefinite representation of the WSOS cone $\Sigma_{n,2\vd}^\vg$ grows linearly with the number $m$ of polynomial weights, whereas its intrinsic dimension remains the same. In particular, assuming for simplicity that all $d_i$'s have the same value $d$, the semidefinite representation of $\Sigma_{n,2\vd}^\vg$ requires $m$ matrix variables of order $L={n+d\choose n}$ each, which implies an $\Oh(m^{0.5} L^{0.5})$ iteration complexity \cite{MonteiroTodd2000} and $\Oh(m^{1.5} L^{6.5})$ running time for optimization over $\Sigma_{n,2\vd}^\vg$ using standard primal-dual interior-point methods for semidefinite programming.

The impracticality of the SDP-based approach for problems involving high-degree polynomials was demonstrated recently in \cite{Papp2017}. In a family of small examples with two WSOS cone constraints (for $n=1$, $m=2$, and increasing $d$), the largest instance that could be solved with the 32GB of available memory using several popular semidefinite programming solvers had $2d=1100$, even though the original SOS optimization problem (before the semidefinite programming reformulation) is a convex optimization problem with only $2d+1$ variables. The increase in the running times as $d$ increases was also found prohibitive in practice, with the largest ``solvable'' instances requiring several hours of computation.

\subsection{Contributions and outline of the paper}

We describe a primal-dual interior-point method for SOS optimization that circumvents the inefficiencies of the semidefinite programming formulation. Our approach adapts a recent algorithm of Skajaa and Ye \cite{SkajaaYe2015} for non-symmetric conic optimization to optimize directly over WSOS cones. We describe this algorithm in Section~\ref{sec:SY}. We provide a brief review of the necessary background on barrier functions in Appendix~\ref{sec:LHSCB} to make the paper self-contained.

Following \cite{LofbergParrilo2004}, our approach takes advantage of the interpolant basis representation of polynomials for fast and stable computation of Newton steps inside the algorithm.
We discuss the complexity of computing the Newton step in the monomial, Chebyshev, and interpolant basis representations in Section~\ref{sec:barriers}. We study how the numerical conditioning of the interpolant basis representation depends on the chosen interpolation points and the basis of the space of polynomials to be squared in Section~\ref{sec:stability}. We also compare the conditioning of this representation against the conditioning of the monomial and Chebyshev representations in this section.

For optimization problems over the cone $\Sigma_{n,2d}$, our approach leads to an algorithm with $\Oh(L^{0.5})$ iteration complexity and $\Oh(L^{0.5}U^3)$ running time. This compares favorably against the $\Oh(L^{6.5})$ time required for the solution of the corresponding semidefinite programs \revise{especially for problems with large $d$}. 
For instance, in the case $n=1$ our method has $\Oh(d^{3.5})$ running time, whereas the standard SDP-based approach requires $\Oh(d^{6.5})$ time.	

Although our approach circumvents the solution of the conventional semidefinite programming formulation of an SOS optimization problem, we show in Section~\ref{sec:recovery} that an optimal solution to the semidefinite program can be recovered with little additional effort. This is necessary, for example, to construct explicit certificates proving that the optimal SOS polynomials computed using our approach are indeed SOS \cite{KaltofenLiYangZhi2008,PeyrlParrilo2008}.

We generalize the results of the previous sections to optimization problems over WSOS cones in Section~\ref{sec:WSOS} and present the results of our numerical experiments in Section~\ref{sec:experiments}. \revise{The results demonstrate that the proposed approach can have significant practical advantages over the standard SDP-based approach for problems requiring high-degree SOS polynomials.}

\deletethis{
We describe a primal-dual interior-point method for SOS optimization that circumvents the inefficiencies of the semidefinite programming formulation. Our approach adapts a recent algorithm of Skajaa and Ye \cite{SkajaaYe2015} for non-symmetric conic optimization to optimize directly over WSOS cones. The iteration complexity of this algorithm is the same as that of the semidefinite programming approach.

Following \cite{LofbergParrilo2004}, our approach takes advantage of the interpolant basis representation of polynomials for fast and stable computation of Newton steps inside the algorithm. We study how the conditioning of the problem is affected by the choice of bases and interpolation points and compare our approach against using the monomial and Chebyshev bases.

For optimization problems over the cone $\Sigma_{n,2d}$, this leads to an algorithm with $\Oh(L^{0.5}U^3)$ running time, which compares favorably against the $\Oh(L^{6.5})$ time required for the solution of the corresponding semidefinite programs. In the case $n=1$, we obtain an algorithm with $\Oh(d^{3.5})$ running time, whereas the standard SDP-based approach requires $\Oh(d^{6.5})$ time.
We also provide a similar analysis for optimization problems over WSOS cones.

Although our approach circumvents the solution of the conventional semidefinite programming formulation of an SOS optimization problem, we show that an optimal solution to the equivalent semidefinite program can be recovered with little additional effort. This is necessary, for example, to construct explicit certificates proving that the optimal SOS polynomials are indeed SOS \cite{KaltofenLiYangZhi2008,PeyrlParrilo2008}.

The remainder of the paper is organized as follows: Section~\ref{sec:SY} describes the primal-dual non-symmetric conic optimization algorithm \cite{SkajaaYe2015} we use to optimize directly over SOS cones. Section~\ref{sec:barriers} discusses how the complexity of computing the Newton step depends on the bases $\vp$ and $\vq$ chosen for $\cV_{n,d}$ and $\cV_{n,2d}$ in Proposition~\ref{thm:Nesterov2000}. Section~\ref{sec:stability} concentrates on the case where $\vq$ is a Lagrange basis. We discuss how to choose the interpolation points and the basis $\vp$ to improve the numerical conditioning of the semidefinite representation of the SOS cone and compare the resulting representation against the alternatives. We also consider the question of how to choose an initial solution for the algorithm to improve its numerical stability. Section~\ref{sec:recovery} demonstrates how to recover explicit SOS decompositions for the optimal SOS polynomials that are computed using our approach. Section~\ref{sec:WSOS} generalizes the results of the previous sections to optimization problems over WSOS cones. Section~\ref{sec:experiments} presents the results of our computational experiments, which illustrate the advantages of the proposed approach over the standard SDP-based approach. Section~\ref{sec:conc} concludes the paper. We provide a brief review of barrier functions in Appendix \ref{sec:LHSCB} for completeness.
}

\section{Non-Symmetric Conic Optimization and the Skajaa--Ye Algorithm}\label{sec:SY}

Primal-dual interior-point methods are widely accepted as the most successful algorithms for conic optimization. The monographs \cite{NesterovNemirovski1994} and \cite{Renegar2001} provide a comprehensive overview of the rich theory behind these algorithms. However, the practical success of primal-dual interior-point methods has been largely limited to optimization problems over symmetric cones, which include linear programming, second-order cone programming, and semidefinite programming as special cases. The algorithms that have been developed for symmetric conic optimization are not directly applicable to SOS optimization because neither the SOS cone nor its dual are symmetric cones. 
Furthermore, most primal-dual interior-point methods for non-symmetric conic optimization, such as those proposed recently by Nesterov and others \cite{NesterovToddYe1999,Nesterov2012}, assume that a tractable logarithmically homogeneous self-concordant barrier (LHSCB) is known for both the primal cone $\cK$ and its dual $\cK^*$. This is not the case for SOS optimization. By Proposition~\ref{thm:Nesterov2000}, the cone $\Sigma_{n,2d}^*$ is a linear slice of a positive semidefinite cone; therefore, an LHSCB for this cone can be obtained by restricting the well-known logarithmic barrier $\vX\mapsto-\ln(\det(\vX))$ for the positive semidefinite cone to this particular slice. On the other hand, no similarly simple LHSCB is known for $\Sigma_{n,2d}$.

Our approach to SOS optimization is an adaptation of a recent primal-dual interior-point method by Skajaa and Ye for non-symmetric conic optimization \cite{SkajaaYe2015,PappYildiz2017corrigendum}. The key feature of this algorithm that makes it attractive for our purposes is that it requires only a tractable LHSCB for the primal cone $\cK$, but assumes nothing about the dual cone $\cK^*$. Hence, letting $\cK=\Sigma_{n,2d}^*$, this algorithm can be used to optimize over SOS polynomials directly. There are other methods with the same feature, e.g. \cite[Sec.~4.5]{NesterovNemirovski1994}; we have chosen to base our approach on the algorithm of Skajaa and Ye because it uses the homogeneous self-dual embedding. Algorithms that are based on the homogeneous self-dual embedding allow infeasible initial solutions, eliminating the need for a phase-I method, and have been used successfully in practice for optimization over symmetric cones.

\deletethis{Our approach to sum-of-squares optimization is an adaptation of a recent algorithm by Skajaa and Ye \cite{SkajaaYe2015,PappYildiz2017corrigendum}, a primal-dual interior-point method for general, non-symmetric, conic optimization that we shall briefly outline below. Most interior-point methods for conic optimization make use of a special class of functions called \emph{logarithmically homogeneous self-concordant barriers}, or LHSCBs for short. (See Appendix \ref{sec:LHSCB} for a brief review of LHSCBs necessary to keep this paper self-contained, and \cite{NesterovNemirovski1994} and \cite{Renegar2001} for a comprehensive overview of the mathematical theory of interior-point methods in convex conic optimization.) Most primal-dual interior-point methods for (\ref{eq:CP-P}-\ref{eq:CP-D}), such as those proposed recently by Nesterov and others \cite{NesterovToddYe1999,Nesterov2012}, assume that an efficiently computable LHSCB is known for both $\cK$ and $\cK^*$. This is not the case for sum-of-squares optimization. By Proposition~\ref{thm:Nesterov2000}, the cone $\Sigma_{n,2d}^*$ is a linear slice of a positive semidefinite cone; therefore, an LHSCB for this cone can be obtained by restricting the well-known logarithmic barrier $\vX\mapsto-\ln(\det(\vX))$ for the positive semidefinite cone to this particular slice. On the other hand, no similarly simple LHSCB is known for $\Sigma_{n,2d}$. The key feature of Skajaa and Ye's method that makes it attractive for our purposes is that it requires only a tractable LHSCB for the primal cone $\cK$, but assumes nothing about the dual cone $\cK^*$. Hence, letting $\cK=\Sigma_{n,2d}^*$, this algorithm can be used to optimize over SOS polynomials directly.}

\deletethis{
The study of theoretically and practically efficient algorithms for conic optimization (and general convex optimization) has largely focused on interior-point methods \cite{ForsgrenGillWright2002,NemirovskiTodd2008}, starting with Karmarkar's algorithm \cite{Karmarkar1984} for linear programming. This was generalized by Alizadeh \cite{Alizadeh1995} to semidefinite programming, by Nemirovskii and Scheinberg \cite{NemirovskiiScheinberg1996} to second-order cone programming, and by Nesterov and Todd \cite{NesterovTodd1997,NesterovTodd1998} to optimization over self-scaled cones, which were later discovered to be the same as symmetric cones \cite{Guler1996,Faybusovich1997,FarautKoranyi1994,Koecher1999}. This mathematically rich theory of interior-point methods was first developed in detail in \cite{NesterovNemirovski1994}, to be revisited and streamlined later in \cite{Renegar2001}. Note that the primal-dual machinery established in these works is only applicable to symmetric cones such as the positive semidefinite cone, which are highly special cases of self-dual cones. In particular, neither the cone of nonnegative polynomials nor the SOS cone nor their duals are symmetric (or even self-dual). Hence, a generalization of this theory is necessary to optimize directly over SOS cones, despite the fact that both the SOS cone and its dual are semidefinite representable.


Most interior-point methods for conic optimization make use of a special class of functions called \emph{logarithmically homogeneous self-concordant barriers}, or LHSCBs for short. (See Appendix~A for a brief review of LHSCBs and related notions.) In general, primal-dual interior-point methods for (\ref{eq:CP-P}-\ref{eq:CP-D}) assume that $\cK$ and $\cK^*$ are proper cones and require LHSCBs for both. The first such algorithm for general (non-symmetric) conic optimization is due to Nesterov, Todd, and Ye \cite{NesterovToddYe1999}. When applied to symmetric cones, this algorithm achieves the same asymptotic complexity as the best known methods for symmetric conic optimization, but it can also be applied to non-symmetric cones $\cK$ as long as both $\cK$ and $\cK^*$ admit LHSCBs whose value, gradient, and Hessian can all be computed efficiently. Nesterov's recent algorithm \cite{Nesterov2012} relaxes some of these requirements by requiring in each iteration the computation of the gradient and Hessian of an LHSCB for $\cK$ and the function values of an LHSCB for $\cK^*$; the derivatives of the dual LHSCB are not needed. Although these two algorithms are important theoretical breakthroughs with manifold applications outlined in \cite{Nesterov2012}, they cannot be used to optimize directly over SOS cones since no efficiently computable LHSCBs are known for SOS cones, even when the polynomials under consideration are univariate.

Recently, Skajaa and Ye \cite{SkajaaYe2015,PappYildiz2017corrigendum} proposed another primal-dual interior-point method for non-symmetric conic optimization. The most important feature of this algorithm that sets it apart from its predecessors is that it requires only a tractable LHSCB for the primal cone $\cK$, but assumes nothing about the dual cone $\cK^*$.
Hence, letting $\cK=\Sigma_{n,2d}^*$, this algorithm can be used to optimize over SOS polynomials directly.
Since according to Proposition~\ref{thm:Nesterov2000}, the cone $\Sigma_{n,2d}^*$ is not only semidefinite representable but also a linear slice of a positive semidefinite cone, an LHSCB for this cone can be obtained as a restriction of the well-known logarithmic barrier $\vX\mapsto-\ln(\det(\vX))$ for the positive semidefinite cone.
}


{\centering
	\begin{minipage}[tbh]{\linewidth}
		\begin{algorithm}[H]
			\caption{Predictor-Corrector Algorithm for Non-Symmetric Conic Optimization}
			\label{alg:SY}
			\begin{algorithmic}
				\STATE \hspace{-1em}\textbf{Parameters:} Real numbers $0<\eta<1$ and $\af_p,\af_c>0$ and an integer $r_c>0$ chosen according to \cite{PappYildiz2017corrigendum}.
				\STATE \hspace{-1em}\textbf{Input:} An LHSCB $F$ for $\cK$ and an initial solution $\vz^0 = (\bx^0,\vy^0,\bs^0) \in \cN(\eta)$.
				\LOOP
				\STATE \textbf{Termination?}
				\STATE If termination criteria are satisfied, stop and return $\vz=(\bx,\vy,\bs)$.
				\STATE \textbf{Prediction}
				\STATE Compute the Hessian $\bH(\bx)$ of $\bF$ at $\bx$, and solve for $\dz=(\dx,\dy,\ds)$ the  system
				 \begin{equation*}
					\begin{aligned}
					 \vG(\dy,\dx)-(\vzero,\ds)&=-(\vG(\vy,\bx)-(\vzero,\bs)),\\
					 \ds+\mu(\vz)\bH(\bx)\dx&=-\bs.
					\end{aligned}
				\end{equation*}\vspace{-1em}
				\STATE Set $\vz\leftarrow \vz+\af_p\dz$.
				\STATE \textbf{Correction}
				\FORALL{$i=1,\ldots,r_c$}
				\STATE Compute the gradient $\bg(\bx)$ and the Hessian $\bH(\bx)$ of $\bF$ at $\bx$, and solve for  $\dz=(\dx,\dy,\ds)$ the system
				 \begin{equation*}
					\begin{aligned}
					 \vG(\dy,\dx)-(\vzero,\ds)&=\vzero,\\
					 \ds+\mu(\vz)\bH(\bx)\dx&=-\psi(\vz).
					\end{aligned}
				\end{equation*}\vspace{-1em}
				\STATE Set $\vz\leftarrow \vz+\af_c\dz$.
				\ENDFOR
				\ENDLOOP
			\end{algorithmic}
		\end{algorithm}
	\end{minipage}
	\vspace{18pt}
}

In the rest of this section, we describe Skajaa and Ye's interior-point method; a pseudocode of the algorithm can be found in Algorithm~\ref{alg:SY}. (We omit a description of the termination criteria for the sake of brevity; see Section~5.4 of \cite{SkajaaYe2015} for details.) The algorithm requires an LHSCB $F:\cK^\circ\to\R$ for the proper cone $\cK$ whose gradient $g$ and Hessian $H$ can be computed efficiently at every point in $\cK^\circ$ and returns an $\varepsilon$-feasible solution to the so-called \emph{homogeneous self-dual embedding} of the problems (\ref{eq:CP-P}-\ref{eq:CP-D}).
Introducing two new scalar variables $\tau,\kappa\geq 0$, this homogeneous self-dual embedding is written as
\begin{gather}\label{eq:embed}
\begin{aligned}
                &           &\quad \vA  &\vx    & -\vb  & \tau  &&          &&          &&=\vzero\\
-\vA^\T         &\vy        &           &       & +\vc  & \tau  &&-\vs      &&          &&=\vzero\\
\quad \vb^\T    &\vy        &-\vc^\T    &\vx    &       &       &&          &&-\kappa   &&=0
\end{aligned}\\
\begin{gathered}
\vy\in\real^k,\quad (\vx,\tau)\in\cK\times\real_+,\quad (\vs,\kappa)\in\cK^*\times\real_+.\notag
\end{gathered}
\end{gather}
Let $\bx\defeq(\vx,\tau)$, $\bs\defeq(\vs,\kappa)$, $\bK\defeq\cK\times\real_+$, and $\bK^*\defeq\cK^*\times\real_+$. If the barrier parameter of $F$ is $\nu$, then the function $\bF(\bx)\defeq F(\vx)-\ln\tau$ is an LHSCB for the cone $\bK$ with barrier parameter $\bnu\defeq\nu+1$. Furthermore, its gradient and Hessian are $\bg(\bx) \defeq (g(\vx),-1/\tau)$ and $\bH(\bx)\defeq \left(\begin{smallmatrix}H(\vx) & \vzero\\ \vzero &1/\tau^2\end{smallmatrix}\right)$, respectively.

We state the precise result regarding the iteration complexity of Algorithm~\ref{alg:SY} next. To make this statement simpler, we let
\[
\vz\defeq(\bx,\vy,\bs),\qquad
\cF\defeq \bK\times\real^k\times\bK^*,
\quad\text{and}\quad
\vG\defeq\left(
\begin{smallmatrix}
\vzero      & \vA       & -\vb\\
-\vA^\T     & \vzero    &  \vc\\
\vb^\T      &-\vc^\T    &  0
\end{smallmatrix}\right)
.\]
Using this notation, the homogeneous self-dual embedding \eqref{eq:embed} can be expressed compactly as
\begin{equation*}
\vG(\vy,\bx)-(\vzero,\bs)=(\vzero,\vzero)\quad\text{and}\quad \vz=(\bx,\vy,\bs)\in\cF.
\end{equation*}
We let $\mu(\vz)\defeq\bx^\T\bs/\bnu$ denote the \emph{complementarity gap} of $\vz$. We also let $\psi(\vz)\defeq\bs+\mu(\vz)\bg(\bx)$.
For $0\leq\theta<1$, we define the \emph{$\theta$-neighborhood of the central path} for \eqref{eq:embed} as
\begin{equation*}
\cN(\theta)\defeq\left\{\vz=(\bx,\vy,\bs)\in\cF^\circ\,\vert\;\left\|\bH(\bx)^{-1/2}\psi(\vz)\right\|\leq\theta\mu(\vz)\right\}.
\end{equation*}

We refer the reader to Section 4.2 of \cite{SkajaaYe2015} for a formal description of the central path.

Algorithm~\ref{alg:SY} alternates between a predictor phase and a corrector phase until an $\varepsilon$-feasible solution to \eqref{eq:embed} is found. Each corrector phase consists of $r_c>0$ consecutive corrector steps. At each predictor and corrector step, the update direction is computed solving a linear system, and the current solution is updated along this direction using the step length $\af_p>0$ in prediction and the step length $\af_c>0$ in correction. With appropriately chosen parameters $\af_p,\af_c,r_c$, and $0<\eta<1$, the algorithm maintains the invariants that the predictor step updates a solution $\vz\in\cN(\eta)$ to a solution $\vz^+\in\cN(\beta)$ for some constant $0<\beta<1$ and the sequence of $r_c$ corrector steps update a solution $\vz\in\cN(\beta)$ to a solution $\vz^+\in\cN(\eta)$.
The following result shows that the parameters for Algorithm~\ref{alg:SY} can be chosen to ensure that the infeasibility and complementarity gap of \eqref{eq:embed} are reduced by a factor of $\varepsilon>0$ in $\Oh(\nu^{0.5}\log\varepsilon^{-1})$ iterations.


\begin{proposition}[\cite{SkajaaYe2015,PappYildiz2017corrigendum}]\label{thm:alg-complex}
For any $\varepsilon>0$, the parameters $\eta$, $\af_p$, $\af_c$, and $r_c$ can be chosen such that, given any initial solution $\vz^0=(\bx^0,\vy^0,\bs^0)\in\cN(\eta)$, Algorithm~\ref{alg:SY} terminates with a solution $\vz^*=(\bx^*,\vy^*,\bs^*) \in\cN(\eta)$ that satisfies
\begin{equation}\label{eq:alg-complex}
\mu(\vz^*)\leq\varepsilon\mu(\vz^0)\quad\text{and}\quad\|\vG(\vy^*,\bx^*)-(\vzero,\bs^*)\|\leq\varepsilon\|\vG(\vy^0,\bx^0)-(\vzero,\bs^0)\|
\end{equation}
in $\Oh(\nu^{0.5}\log\varepsilon^{-1})$ iterations.
\end{proposition}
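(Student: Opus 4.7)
The plan is to follow the standard predictor-corrector template for path-following interior-point methods in the homogeneous self-dual embedding, verifying that $\eta$, $\af_p$, $\af_c$, and $r_c$ can be tuned so that: (i) a single predictor step takes a point from the tight neighborhood $\cN(\eta)$ into a larger fixed neighborhood $\cN(\beta)$ (for some constant $\beta\in(\eta,1)$) while reducing the complementarity gap $\mu(\vz)$ by the multiplicative factor $(1-\af_p)$; and (ii) the subsequent $r_c$ corrector steps restore the tight neighborhood $\cN(\eta)$ without changing $\mu(\vz)$. Iterating this invariant shows that $\mu$ decays geometrically per outer iteration, so taking $\af_p=\Theta(1/\sqrt{\bnu})$ and using $\bnu=\nu+1$ produces the claimed $\Oh(\nu^{0.5}\log\varepsilon^{-1})$ bound.

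For the corrector analysis, I would first exploit the homogeneous right-hand side of the corrector system: the equation $\vG(\dy,\dx)-(\vzero,\ds)=\vzero$ forces the corrector direction to lie in the tangent space of affine feasibility, so the embedding residual is preserved exactly. A short calculation using logarithmic homogeneity of $\bF$ (in particular $\bx^\T\bg(\bx)=-\bnu$ and $\bH(\bx)\bx=-\bg(\bx)$) shows $\bx^\T\ds+\bs^\T\dx=0$, so $\mu(\vz)$ is invariant under each corrector step as well. I would then interpret the corrector as a damped Newton step for the centering equation $\psi(\vz)=\vzero$ and invoke self-concordance of $\bF$ on $\bK$ to obtain a quadratic contraction
\[
\bigl\|\bH(\bx^+)^{-1/2}\psi(\vz^+)\bigr\|\leq c\bigl\|\bH(\bx)^{-1/2}\psi(\vz)\bigr\|^2
\]
inside the Dikin ellipsoid, as established via the corrected analysis in \cite{PappYildiz2017corrigendum}. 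Since $\vz$ lies in $\cN(\beta)$ with $\beta$ bounded away from one at the start of the corrector phase, a constant number $r_c$ of steps drives the Newton decrement below $\eta\mu(\vz)$, placing $\vz^+\in\cN(\eta)$.

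For the predictor step, the right-hand side $-(\vG(\vy,\bx)-(\vzero,\bs))$ of the first block is constructed so that infeasibility scales linearly: $\vG(\vy+\af_p\dy,\bx+\af_p\dx)-(\vzero,\bs+\af_p\ds)=(1-\af_p)(\vG(\vy,\bx)-(\vzero,\bs))$. Together with $\bs+\mu(\vz)\bH(\bx)\dx=-\bs$ and logarithmic homogeneity, this yields the identity $\mu(\vz+\af_p\dz)=(1-\af_p)\mu(\vz)$, so both $\mu$ and the residual contract by the same factor. Self-concordance of $\bF$ then bounds the change in the proximity measure $\|\bH(\bx)^{-1/2}\psi(\vz)\|/\mu(\vz)$ along the predictor direction, and choosing $\af_p=\Theta(1/\sqrt{\bnu})$ keeps this measure below $\beta$, so $\vz^+\in\cN(\beta)$.

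I expect the main obstacle to be the corrector contraction estimate: the classical self-concordance arguments for symmetric cones do not translate verbatim, since here $F$ is an arbitrary LHSCB rather than one associated with a self-scaled cone, and the original Skajaa--Ye proof had a gap patched in \cite{PappYildiz2017corrigendum} precisely in this step. Once the contraction is in hand, the composition of one predictor and $r_c$ corrector steps reduces $\mu$ by the fixed factor $(1-\af_p)$, so after $N=\Oh(\af_p^{-1}\log\varepsilon^{-1})=\Oh(\sqrt{\nu}\log\varepsilon^{-1})$ iterations both $\mu(\vz^*)/\mu(\vz^0)$ and $\|\vG(\vy^*,\bx^*)-(\vzero,\bs^*)\|/\|\vG(\vy^0,\bx^0)-(\vzero,\bs^0)\|$ are at most $\varepsilon$, establishing \eqref{eq:alg-complex}.
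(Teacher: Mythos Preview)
The paper does not prove this proposition at all: it is stated with the citation \cite{SkajaaYe2015,PappYildiz2017corrigendum} and used as a black box, so there is no ``paper's own proof'' to compare against. Your sketch is consistent with the argument in those references---the predictor reduces both $\mu$ and the residual by the factor $(1-\af_p)$, the correctors restore the tight neighborhood while preserving $\mu$ and the residual, and self-concordance of $\bF$ controls the proximity measure---but for the purposes of this paper no proof is expected or given.
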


Note that the step size $\af_p$ is a fixed parameter in the predictor step of Algorithm~\ref{alg:SY} as stated. However, the analysis of the algorithm and the iteration complexity result stated in Proposition~\ref{thm:alg-complex} are also applicable to the variant that instead uses line search to compute the (approximately) largest $\af_p$ for which $z+\af_p\dz\in\cN(\beta)$.

It can be shown that if the problems (\ref{eq:CP-P}-\ref{eq:CP-D}) are both feasible and have a zero duality gap, then Algorithm~\ref{alg:SY} returns a final solution $\vz^*$ with $\tau^*>0$, and the vectors $\vx^*/\tau^*$ and $(\vy^*,\vs^*)/\tau^*$ are approximately optimal solutions to (\ref{eq:CP-P}-\ref{eq:CP-D}), respectively. On the other hand, if one or both of the problems (\ref{eq:CP-P}-\ref{eq:CP-D}) are infeasible, then the algorithm returns a final solution $\vz^*$ with $\kappa^*>0$, and certificates of infeasibility can be obtained. For additional details, the reader is referred to \cite[Lem.~1]{SkajaaYe2015} and the discussion that follows.

\section{Tractable Barrier Functions for the Dual SOS Cone}\label{sec:barriers}

The cone $\Sigma_{n,2d}^*$ is a linear slice of the positive semidefinite cone $\bS^L_+$. Therefore, a restriction of the logarithmic barrier function $\vX\mapsto-\ln(\det(\vX))$ used in semidefinite programming is an LHSCB for $\Sigma_{n,2d}^*$. Furthermore, the barrier parameter of this restriction cannot exceed the barrier parameter $L={n+d\choose n}$ of the original logarithmic barrier function (see, for instance, \cite[Thm.~2.3.2]{Renegar2001}). In the case $n=1$, Nesterov \cite{Nesterov2000} showed that $\Sigma_{n,2d}^*$ does not admit any LHSCBs with a barrier parameter smaller than $L=d+1$, and his argument extends to general $n$ in a straightforward manner. Thus, we arrive at the following result.

\begin{proposition}[\cite{Nesterov2000}]\label{thm:Sigma*-barrier}
Using the notation of Proposition~\ref{thm:Nesterov2000}, for every pair of bases $\vp$ of $\cV_{n,d}$ and $\vq$ of $\cV_{n,2d}$, and the corresponding operator $\Lambda$, the function $\vx\mapsto-\ln(\det(\Lambda(\vx)))$ is an LHSCB for the cone $\Sigma_{n,2d}^*$ with barrier parameter $L={n+d\choose n}$.
\end{proposition}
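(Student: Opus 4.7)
The key idea is to recognize $F(\vx) \defeq -\ln\det(\Lambda(\vx))$ as the composition of the standard logarithmic barrier $\Phi: \vX \mapsto -\ln\det(\vX)$ on the positive semidefinite cone with the linear map $\Lambda$, then invoke standard composition rules for LHSCBs.

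First, I would recall that $\Phi$ is a well-known $L$-LHSCB for $\bS^L_+$ (see, e.g., \cite{NesterovNemirovski1994,Renegar2001}). By Proposition~\ref{thm:Nesterov2000}, $\Sigma_{n,2d}^* = \Lambda^{-1}(\bS^L_+)$, and since $\Sigma_{n,2d}$ is a proper cone by \cite[Thm. 17.1]{Nesterov2000}, so is its dual $\Sigma_{n,2d}^*$. In particular, $(\Sigma_{n,2d}^*)^\circ = \Lambda^{-1}(\bS^L_{++})$ is nonempty, so $F$ is well-defined on a nonempty open convex set.

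Next, I would verify the three defining properties of an LHSCB. \emph{(i) Self-concordance} is preserved under composition with linear maps, because directional derivatives of $F$ in a direction $\vh$ coincide with those of $\Phi$ in direction $\Lambda(\vh)$; the self-concordance inequality for $F$ then follows directly from the one satisfied by $\Phi$. \emph{(ii) Barrier property}: as $\vx$ approaches $\partial\Sigma_{n,2d}^*$, $\Lambda(\vx)$ approaches $\partial\bS^L_+$ by continuity, so $\det(\Lambda(\vx))\to 0^+$ and $F(\vx)\to+\infty$. \emph{(iii) Logarithmic homogeneity of degree $L$}: for $t>0$, linearity of $\Lambda$ gives $\Lambda(t\vx)=t\Lambda(\vx)$, hence $\det(\Lambda(t\vx))=t^L\det(\Lambda(\vx))$, and therefore $F(t\vx)=F(\vx)-L\ln t$. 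Since the barrier parameter of an LHSCB coincides with its degree of logarithmic homogeneity, this pins down the parameter as exactly $L=\binom{n+d}{n}$.

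The only mild subtlety is the positive definiteness of the Hessian of $F$ on $(\Sigma_{n,2d}^*)^\circ$, which requires $\Lambda$ to be injective. This is immediate from the fact that the products $\{p_ip_j\}_{i,j=1}^L$ span $\cV_{n,2d}$ (because $\vp$ is a basis of $\cV_{n,d}$), which makes $\Lambda^*$ surjective and hence $\Lambda$ injective. I do not anticipate a genuine obstacle: the entire argument amounts to assembling standard composition rules for LHSCBs with the one-line determinant calculation identifying the homogeneity degree.
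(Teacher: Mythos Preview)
Your proposal is correct and follows the same basic idea as the paper---viewing $F$ as the pullback of the log-determinant barrier along the linear map $\Lambda$---but you identify the barrier parameter differently. The paper argues in two steps: first, restriction of a $\nu$-self-concordant barrier to a linear slice yields a barrier with parameter at most $\nu$ (\cite[Thm.~2.3.2]{Renegar2001}), giving $\nu\le L$; second, Nesterov's lower bound \cite{Nesterov2000} shows that no LHSCB for $\Sigma_{n,2d}^*$ can have parameter below $L$, forcing $\nu=L$. Your route is more direct: you simply compute the logarithmic homogeneity degree via $\det(\Lambda(t\vx))=t^L\det(\Lambda(\vx))$ and use that for an LHSCB the barrier parameter coincides with this degree. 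This is shorter and avoids invoking Nesterov's lower-bound result; on the other hand, the paper's argument yields the additional (and not entirely trivial) fact that $L$ is optimal among \emph{all} LHSCBs for $\Sigma_{n,2d}^*$, which your argument does not address but which the proposition as stated does not require either. Your treatment of the injectivity of $\Lambda$ (needed for positive definiteness of the Hessian) via the spanning property of $\{p_ip_j\}$ is fine.
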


\revise{
\begin{corollary}\label{thm:alg-complex-corollary}
Using the notation of Propositions \ref{thm:alg-complex} and \ref{thm:Sigma*-barrier}, Algorithm~\ref{alg:SY} using the barrier function $\vx\mapsto-\ln(\det(\Lambda(\vx)))$ for the cone $\cK=\Sigma_{n,2d}^*$ terminates with a solution \deletethis{$\vz^*\in\cN(\eta)$} satisfying the conditions \eqref{eq:alg-complex} \deletethis{in Proposition \ref{thm:alg-complex}} in $\Oh(L^{0.5}\log\varepsilon^{-1})$ iterations.
\end{corollary}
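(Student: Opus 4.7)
The plan is to observe that this corollary is an immediate consequence of combining Proposition~\ref{thm:alg-complex} and Proposition~\ref{thm:Sigma*-barrier}. There is no new mathematics to develop; the only task is to verify that the hypotheses of Algorithm~\ref{alg:SY} are satisfied when we take $\cK = \Sigma_{n,2d}^*$ with the stated barrier, and then to substitute the correct value of the barrier parameter into the iteration-count bound.

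First, I would check that the algorithm is applicable. Algorithm~\ref{alg:SY} requires $\cK$ to be a proper cone equipped with an LHSCB $F$ whose gradient and Hessian are available. By Proposition~\ref{thm:Nesterov2000}, $\Sigma_{n,2d}^*$ is the preimage under the linear map $\Lambda$ of the proper cone $\bS^L_+$; since $\Sigma_{n,2d}$ is itself a proper cone (by \cite[Thm.~17.1]{Nesterov2000}), so is its dual $\Sigma_{n,2d}^*$. Proposition~\ref{thm:Sigma*-barrier} asserts that $\vx \mapsto -\ln(\det(\Lambda(\vx)))$ is an LHSCB for $\Sigma_{n,2d}^*$ with barrier parameter $\nu = L = {n+d \choose n}$, so the hypotheses of Algorithm~\ref{alg:SY} and of Proposition~\ref{thm:alg-complex} are met.

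Next, I would invoke Proposition~\ref{thm:alg-complex} directly: for any $\varepsilon>0$, with parameters $\eta,\af_p,\af_c,r_c$ chosen as in \cite{PappYildiz2017corrigendum}, and starting from any $\vz^0\in\cN(\eta)$, the algorithm produces a solution $\vz^*\in\cN(\eta)$ satisfying \eqref{eq:alg-complex} within $\Oh(\nu^{0.5}\log\varepsilon^{-1})$ iterations. Substituting $\nu = L$ from Proposition~\ref{thm:Sigma*-barrier} yields the claimed iteration bound $\Oh(L^{0.5}\log\varepsilon^{-1})$.

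There is no genuine obstacle here; the corollary is essentially a restatement. The only subtlety worth noting is that Proposition~\ref{thm:alg-complex} is stated for the proper cone $\cK$ of the primal problem \eqref{eq:CP-P}, whereas in our application it is $\Sigma_{n,2d}^*$ (the \emph{dual} SOS cone) that plays the role of $\cK$. This is consistent with the convention established in Section~\ref{sec:SY}: the Skajaa--Ye method needs an LHSCB only for the primal cone in the problem it solves, and for SOS optimization we set up the conic program so that the dual SOS cone occupies this slot, since that is the cone for which Proposition~\ref{thm:Sigma*-barrier} supplies a computable barrier.
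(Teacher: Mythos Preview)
Your proposal is correct and matches the paper's approach: the paper states this corollary without proof, treating it as immediate from Propositions~\ref{thm:alg-complex} and~\ref{thm:Sigma*-barrier}, which is exactly what you do by verifying the hypotheses and substituting $\nu = L$.
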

}
\revise{Note that this iteration complexity is the same as that of standard primal-dual interior-point methods applied to semidefinite programming problems with a single $L\times L$ matrix variable (see Section \ref{sec:complexity}).}

Note that the barrier function $\vx\mapsto-\ln(\det(\Lambda(\vx)))$ depends explicitly not only on the basis $\vq$ used to represent the polynomials in $\Sigma_{n,2d}$, but also on the basis $\vp$ in which the ``polynomials to be squared'' are represented. These basis choices greatly affect whether the barrier function and its derivatives can be evaluated in an efficient and numerically stable fashion. Depending on the subspace spanned by $\Lambda$, computing the gradient and Hessian of the barrier function may become the bottleneck of optimization over $\Sigma_{n,2d}$, or these computations may become ill-conditioned enough to make optimization over $\Sigma_{n,2d}$ impractical.


Let $F:(\Sigma_{n,2d}^*)^\circ\to\R$ be the function $F(\vx) \defeq -\ln(\det(\Lambda(\vx)))$. Recall that $\Lambda:\R^U\to\bS^L$ is a linear operator; therefore, there exist matrices $\vE_1,\ldots,\vE_U\in\bS^L$ such that $\Lambda(\vx) = \sum_{u=1}^U \vE_u x_u$. Using simple calculus, we get
\begin{equation}\label{eq:logdetgrad}
\frac{\partial F}{\partial x_u}(\vx) = -\Lambda(\vx)^{-1} \bullet \frac{\partial \Lambda(\vx)}{\partial x_u} = -\Lambda(\vx)^{-1} \bullet \vE_u, \qquad u=1,\ldots,U.
\end{equation}
Equivalently, $\nabla F(\vx)=-\Lambda^*(\Lambda(\vx)^{-1})$. To obtain the Hessian of $F$, we may start with the derivative of the inverse in differential form: $\dop\vX^{-1} = -\vX^{-1}(\dop\vX)\vX^{-1}$. This yields
\begin{equation}\label{eq:logdetHess}
\frac{\partial^2 F}{\partial
	x_u \partial x_v}(\vx) = \left(
\Lambda(\vx)^{-1}\vE_v\Lambda(\vx)^{-1}\right)\bullet\vE_u, \qquad u,v = 1, \ldots, U.
\end{equation}
That is to say, the Hessian $\nabla^2 F(\vx)$ is the linear operator that satisfies $\nabla^2 F(\vx)\vw=\Lambda^*\big(\Lambda(\vx)^{-1}\Lambda(\vw)\Lambda(\vx)^{-1}\big)$ for all $\vw\in\R^U$. The formulas (\ref{eq:logdetgrad}-\ref{eq:logdetHess}) indicate that the computation of the barrier gradient and Hessian can be inefficient and ill-conditioned.
In the remainder of this section, we consider natural basis choices for $\cV_{n,d}$ and $\cV_{n,2d}$ and compare the efficiency and numerical stability of computing the corresponding barrier function derivatives.

\subsection{Monomial basis}\label{sec:monomial-basis}

It is well-known (and easily derived from Proposition~\ref{thm:Nesterov2000}) that if $n=1$ and the bases $\vp$ and $\vq$ consist of monomials up to degree $d$ and $2d$ respectively, then $\Lambda$ is the mapping from $\vx$ to its \emph{Hankel matrix} $(x_{i+j})_{i,j=0,\ldots,d}$. The inverse of a positive definite Hankel matrix can be computed in $\Oh(d^2)$ time using specialized algorithms such as those described in \cite{HeinigJankowski1990} and \cite[Ch.~5]{Pan2001}.
Additionally, in this case $\vE_u = (\delta_{i+j,u})_{i,j=0,\ldots,d}$ for $u=0,\ldots,2d$, where $\delta_{i+j,u}$ denotes the Kronecker delta. The special structure of these matrices allows for numerous simplifications in the formulas (\ref{eq:logdetgrad}-\ref{eq:logdetHess}). With $\Lambda(\vx)^{-1}$ already computed, the gradient $\nabla F(\vx)$ can be calculated with $O(d^2)$ arithmetic operations using \eqref{eq:logdetgrad}. The calculation of the Hessian can also be accelerated. The argument below follows \cite[Thm.~5.27]{Papp2011}; similar solutions had also been proposed in \cite{AlkireVandenberghe2002,GeninHachezNesterovVanDooren2003}. Let $z_{ij}$ denote the $(i,j)$-th entry of $\Lambda(\vx)^{-1}$. Then
\[ \Lambda(\vx)^{-1}\vE_v\Lambda(\vx)^{-1} = \bigg(\sum_{k+l=v}z_{ak}z_{lb} \bigg)_{a,b=0,\ldots 2d}, \]
which in turn yields
\[\frac{\partial^2 F}{\partial x_u \partial x_v}(\vx)
= \left( \Lambda(\vx)^{-1}\vE_v\Lambda(\vx)^{-1}\right) \bullet \vE_u
= \sum_{\substack{a+b=u\\k+l=v}}z_{ak}z_{bl}.\]
The last summation shows that the Hessian $\nabla^2 F(\vx)$ is the convolution of $\Lambda(\vx)^{-1}$ with itself; equivalently, the Hessian is the coefficient matrix of the square of the bivariate polynomial whose coefficient matrix is $\Lambda(\vx)^{-1}$. If $\Lambda(\vx)^{-1}$ is already computed, this convolution can be computed with a single bivariate polynomial multiplication. This multiplication can be carried out in $\Oh(d^2\log d)$ arithmetic operations using two-dimensional fast Fourier transform. Hence, we have shown the following:

\begin{theorem}\label{thm:monomial-basis}
Using the notation of Proposition~\ref{thm:Nesterov2000}, if $n=1$ and the bases $\vp$ and $\vq$ consist of the monomials up to degree $d$ and $2d$ respectively, then the gradient and Hessian of the barrier $\vx\mapsto-\ln(\det(\Lambda(\vx)))$ can be computed in $O(d^2\log d)$ time.
\end{theorem}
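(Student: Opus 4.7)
The plan is to leverage the special Hankel structure of $\Lambda(\vx)$ under the monomial basis throughout, reducing every step to an operation on $\Lambda(\vx)^{-1}$. First, I would compute $\Lambda(\vx)^{-1}$ using an $\Oh(d^2)$ Hankel matrix inversion algorithm such as the one of \cite{HeinigJankowski1990} or those surveyed in \cite{Pan2001}; this is already within the target budget and produces the $(d+1)\times(d+1)$ matrix $(z_{ij})$ used throughout the rest of the calculation. Denote by $P(s,t) \defeq \sum_{i,j=0}^d z_{ij}\, s^i t^j$ its bivariate generating polynomial; both the gradient and Hessian formulas will be interpreted in terms of $P$.

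For the gradient, I would observe that each $\vE_u = (\delta_{i+j,u})_{i,j=0,\ldots,d}$ is the indicator of the $u$-th antidiagonal, so the formula \eqref{eq:logdetgrad} reduces to the antidiagonal sum
\[ \frac{\partial F}{\partial x_u}(\vx) = -\sum_{i+j=u} z_{ij}, \qquad u=0,\ldots,2d. \]
Each entry $z_{ij}$ appears in exactly one such sum, so the complete gradient vector is produced with $\Oh(d^2)$ additions once $\Lambda(\vx)^{-1}$ is in hand.

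The Hessian is where the bulk of the work lies. I would start from \eqref{eq:logdetHess}, expand the matrix product $\Lambda(\vx)^{-1}\vE_v\Lambda(\vx)^{-1}$ using the antidiagonal structure of $\vE_v$ to recover the intermediate identity $\big(\sum_{k+l=v} z_{ak}z_{lb}\big)_{a,b}$ shown in the excerpt, and then take the Frobenius inner product with $\vE_u$ to obtain
\[ \frac{\partial^2 F}{\partial x_u\partial x_v}(\vx) \;=\; \sum_{\substack{a+b=u\\ k+l=v}} z_{ak}\, z_{bl}. \]
I would then recognize this quantity as the coefficient of $s^u t^v$ in $P(s,t)^2$. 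Squaring a bivariate polynomial of bidegree $(d,d)$ is a two-dimensional convolution, which can be performed via two-dimensional FFT in $\Oh(d^2\log d)$ arithmetic operations: one forward FFT of size $\Oh(d)\times\Oh(d)$ applied to the coefficient matrix of $P$ (padded to avoid aliasing), pointwise squaring of the $\Oh(d^2)$ transformed values, and one inverse FFT. Summing the three phases gives the claimed $\Oh(d^2\log d)$ bound, since the Hessian step dominates both the Hankel inversion and the gradient computation.

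The main point to get right is the combinatorial identification of the Hessian entry with the coefficient of $s^u t^v$ in $P(s,t)^2$; this requires carefully bookkeeping the four index pairs $(a,b,k,l)$ generated by composing two antidiagonal contractions and matching them to the two factors of the bivariate product. Once that identification is verified, the running time bound is an immediate consequence of the standard two-dimensional FFT complexity, and no further analysis is needed.
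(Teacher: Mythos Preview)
Your proposal is correct and follows essentially the same route as the paper: Hankel inversion in $\Oh(d^2)$, antidiagonal sums for the gradient in $\Oh(d^2)$, and the identification of the Hessian entries with the coefficients of the square of the bivariate polynomial with coefficient matrix $\Lambda(\vx)^{-1}$, computed via two-dimensional FFT in $\Oh(d^2\log d)$. The paper phrases the last step as ``the Hessian is the convolution of $\Lambda(\vx)^{-1}$ with itself,'' which is exactly your generating-polynomial observation.
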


However, the monomial basis representation is not suitable for problems involving high-degree polynomials because the resulting semidefinite representation is inherently ill-conditioned. In particular, the condition number of positive definite Hankel matrices increases exponentially with the dimension, and every positive definite Hankel matrix of order 40 or higher has a condition number greater than the reciprocal of machine epsilon in double precision \cite{Beckermann2000}. Therefore, any interior-point method that optimizes over $\Sigma_{1,2d}^*$ in the monomial basis representation and requires the solution of linear systems where $\Lambda(\vx)$ is the constraint matrix is unstable and impractical for even moderate degrees.
In addition, the scheme described above for computing the barrier gradient and Hessian exploits the fact that $\Lambda(\vx)$ is a Hankel matrix in the semidefinite representation of the univariate (unweighted) SOS polynomials, and it is not immediate to generalize this approach to multivariate or WSOS polynomials while also maintaining its efficiency.

\subsection{Chebyshev basis}\label{sec:chebyshev-basis}
The proof of Theorem~\ref{thm:monomial-basis} carries over to other bases $\vp$ and $\vq$ as long as the coefficients (in the basis $\vq$) of the square of a polynomial (given by its coefficients in the basis $\vp$) can be computed in $O(d^2\log d)$ time and the computations involving Hankel matrices can be replaced with analogous computations involving another family of structured matrices for which matrix inversion can be carried out in $O(d^2\log d)$ time. This is, for example, true when $n=1$ and the bases $\vp$ and $\vq$ consist of the Chebyshev polynomials (of the first kind) up to degree $d$ and $2d$ respectively. These are the polynomials defined according to the recursion
\[ T_0(t) = 1,\quad T_1(t) = t,\quad\text{and}\quad T_i(t) = 2tT_{i-1}(t) - T_{i-2}(t) \quad \forall\;i \geq 2.\]
Using the well-known identity $T_i T_j = \frac{1}{2}(T_{i+j} + T_{|i-j|})$ (see, e.g., \cite[Sec.~2.4]{MasonHandscomb2003}), we find that the operator $\Lambda$ corresponding to this choice of bases is
\[ \Lambda(\vx)_{ij} = \frac{x_{i+j} + x_{|i-j|}}{2} \quad i,j=0, \ldots, d. \]
Therefore, the matrix $\Lambda(\vx)$ is now a \emph{Toeplitz-plus-Hankel matrix}. The inverse of a positive definite Toeplitz-plus-Hankel matrix can be computed in $\Oh(d^2)$ time \cite[Ch.~5]{Pan2001}.
Furthermore, extensions of the fast Fourier transform to Chebyshev polynomials are also known \cite[Ch.~4]{MasonHandscomb2003}. Consequently, the algorithm outlined in the discussion preceding Theorem~\ref{thm:monomial-basis} extends to the case where both degree-$d$ and degree-$2d$ polynomials are represented in the Chebyshev basis, and its running time remains the same.

\begin{theorem}\label{thm:chebyshev-basis}
Using the notation of Proposition~\ref{thm:Nesterov2000}, if $n=1$ and the bases $\vp$ and $\vq$ consist of the Chebyshev polynomials of the first kind up to degree $d$ and $2d$ respectively, then the gradient and Hessian of the barrier $\vx\mapsto-\ln(\det(\Lambda(\vx)))$ can be computed in $\Oh(d^2\log d)$ time.
\end{theorem}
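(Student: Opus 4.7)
The plan is to mirror the proof of Theorem~\ref{thm:monomial-basis} step by step, replacing Hankel matrices with Toeplitz-plus-Hankel matrices and monomial-basis polynomial multiplication with Chebyshev-basis polynomial multiplication. First, I would verify the explicit form $\Lambda(\vx)_{ij} = (x_{i+j}+x_{|i-j|})/2$ by applying the product identity $T_iT_j = \tfrac{1}{2}(T_{i+j}+T_{|i-j|})$ entrywise, which is immediate from the definition of $\Lambda$ via $\Lambda(\vq)=\vp\vp^\T$. This identifies $\Lambda(\vx)$ as a Toeplitz-plus-Hankel matrix, and the inversion result cited from \cite[Ch.~5]{Pan2001} then yields $\Lambda(\vx)^{-1}$ in $\Oh(d^2)$ time, matching the corresponding step for Hankel matrices.

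Next, I would derive the structure of the matrices $\vE_u$ from the same product identity: $(\vE_u)_{ij}=\tfrac{1}{2}(\delta_{i+j,u}+\delta_{|i-j|,u})$. The gradient formula \eqref{eq:logdetgrad} then becomes a sum along an anti-diagonal plus a sum along a diagonal of $\Lambda(\vx)^{-1}$, so once $\Lambda(\vx)^{-1}$ is computed all $U$ entries of the gradient can be read off in $\Oh(d^2)$ total time. The main work is in the Hessian. Plugging the expression for $\vE_u$ and $\vE_v$ into \eqref{eq:logdetHess} expands $\partial^2F/\partial x_u\partial x_v$ into four bilinear sums in the entries of $\Lambda(\vx)^{-1}$, each of the form $\sum z_{ak}z_{bl}$ where $a\pm b$ equals $u$ (up to sign) and $k\pm l$ equals $v$. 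Reinterpreting these sums, I would show that the Hessian matrix is exactly the coefficient matrix, in the bivariate tensor Chebyshev basis $\{T_i(s)T_j(t)\}$, of the square of the bivariate polynomial whose Chebyshev coefficient matrix is $\Lambda(\vx)^{-1}$. This is the Chebyshev analogue of the convolution identity used in Theorem~\ref{thm:monomial-basis}.

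Finally, I would invoke the Chebyshev FFT (two-dimensional discrete cosine transform) machinery from \cite[Ch.~4]{MasonHandscomb2003} to carry out this bivariate Chebyshev-basis multiplication in $\Oh(d^2\log d)$ operations, which dominates the cost of the other steps and gives the claimed bound. The main obstacle I anticipate is the bookkeeping in the reduction of the Hessian to a single bivariate Chebyshev multiplication: unlike the pure convolution in the monomial case, the Chebyshev product splits into $T_{i+j}$ and $T_{|i-j|}$ components, producing four separate diagonal-and-anti-diagonal contributions that must be reassembled into one Chebyshev polynomial product. Once this recombination is verified, the $\Oh(d^2\log d)$ cost of the Chebyshev FFT closes the argument.
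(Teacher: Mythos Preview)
Your proposal is correct and follows essentially the same approach as the paper: the paper's argument (given in the paragraph preceding the theorem) simply observes that the proof of Theorem~\ref{thm:monomial-basis} carries over once one verifies that $\Lambda(\vx)$ is Toeplitz-plus-Hankel (invertible in $\Oh(d^2)$ via \cite[Ch.~5]{Pan2001}) and that Chebyshev-basis polynomial multiplication can be done via Chebyshev FFT \cite[Ch.~4]{MasonHandscomb2003}. You spell out more of the details---in particular the explicit form of the $\vE_u$ and the identification of the Hessian with the bivariate Chebyshev coefficient matrix of the square of the polynomial with coefficient matrix $\Lambda(\vx)^{-1}$---but the structure and the cited ingredients are identical.
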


While the Chebyshev basis representation does address the numerical problems associated with the monomial basis, the efficient scheme described above for computing the barrier gradient and Hessian requires specialized techniques that take advantage of the Toeplitz-plus-Hankel structure of the matrix $\Lambda(\vx)$ which is encountered in the semidefinite representation of univariate (unweighted) SOS polynomials. As with the monomial basis, the generalization of this approach to multivariate or WSOS polynomials is not straightforward and will not be pursued in this paper.

\subsection{Interpolant basis}\label{sec:interpolant-basis}
Another approach to address the numerical difficulties that arise when using monomial bases is to use interpolating polynomials. Recall that a set of points in $\real^n$ is called \emph{unisolvent} for a linear space $\cV$ of $n$-variate polynomials if every polynomial in $\cV$ is uniquely determined by its function values at these points. If $n=1$, every set of $2d+1$ distinct points is unisolvent for $\cV_{n,2d}$, but this is no longer the case for $n\geq 2$.

Representing degree-$2d$ polynomials with their values at prescribed interpolation points, we reach the interpolant basis representation of $\Sigma_{n,2d}^*$. To make this concrete, let $\cT\defeq\{\vt_1,\ldots,\vt_U\}\subset\R^n$ be a unisolvent set for $\cV_{n,2d}$.
For $u=1,\ldots,U$, let $q_u\in\cV_{n,2d}$ be the Lagrange polynomial that satisfies $q_u(\vt_u)=1$ and $q_u(\vt_v)=0$ for every $v\neq u$.
Then $\vq=(q_1,\ldots,q_U)$ is a Lagrange basis for $\cV_{n,2d}$. The coefficients of any polynomial $f\in\cV_{n,2d}$ in this basis are precisely its function values at $\vt_1,\ldots,\vt_U$: $f=\sum_{u=1}^U f(\vt_u)q_u$. In particular, given any basis $\vp$ of $\cV_{n,d}$, we have $p_ip_j=\sum_{u=1}^U p_i(\vt_u)p_j(\vt_u)q_u$ for all $i,j=1,\ldots,L$. In matrix form, these equations can be expressed as $\vp\vp^\T=\vP^\T\diag(\vq)\vP$ where $\vP \defeq (p_\ell(\vt_u))_{u=1,\ldots,U;\ell=1,\ldots,L}$. Thus, the operator $\vx\mapsto\vP^\T\diag(\vx)\vP$ satisfies the condition $\vp\vp^\T=\Lambda(\vq)$ in Proposition~\ref{thm:Nesterov2000}, and because $\Lambda$ is the unique linear operator with this property, we have $\Lambda(\vx)=\vP^\T\diag(\vx)\vP.$
Accordingly, the cone $\Sigma_{n,2d}^*$ admits a semidefinite characterization as the set of points $\vx\in\R^U$ that satisfy $\vP^\T\diag(\vx)\vP\succcurlyeq\vzero$.




The operator $\Lambda$ has the expression $\Lambda(\vx)=\sum_{u=1}^U \vE_ux_u$ in terms of the matrices $\vE_u = \vp(\vt_u)\vp(\vt_u)^\T$. Taking advantage of the rank-one structure of these matrices, the adjoint of $\Lambda$ can be expressed as $\Lambda^*(\vS)=(\vE_u\bullet\vS)_{u=1,\ldots,U}=\diag(\vP\vS\vP^\T)$. Furthermore, the formulas (\ref{eq:logdetgrad}-\ref{eq:logdetHess}) for the gradient and Hessian of the barrier $F(\vx)=-\ln(\det(\Lambda(\vx)))$ can be simplified to
\begin{gather}
\nabla F(\vx) = -\diag\left(\vP\left(\vP^\T\diag(\vx)\vP\right)^{-1}\vP^\T\right)\quad\text{and}\label{eq:grad}\\
\nabla^2 F(\vx) = \left(\vP\left(\vP^\T\diag(\vx)\vP\right)^{-1}\vP^\T\right)^{\circ2},\label{eq:Hess}
\end{gather}
where $\vQ^{\circ2}$ denotes the elementwise (Hadamard) square of the matrix $\vQ$. These formulas allow the efficient computation of $\nabla F(\vx)$ and $\nabla^2 F(\vx)$ without structured matrix inversion or multivariate fast Fourier transform, in any dimension, as the next theorem demonstrates.

\begin{theorem}\label{thm:interpolant-basis}
Using the notation of Proposition~\ref{thm:Nesterov2000}, for every $n$ and $d$ and for every basis $\vp$ of $\cV_{n,d}$, if $\vq$ is a Lagrange basis for $\cV_{n,2d}$, then the gradient and Hessian of the barrier $\vx\mapsto-\ln(\det(\Lambda(\vx)))$ can be computed in $\Oh(LU^2)$ time, using $\Oh(LU)$ working memory in addition to the $\Oh(U^2)$ space required to store the Hessian.

In particular, when $n=1$, these computations take $\Oh(d^3)$ time using $\Oh(d^2)$ space.
\end{theorem}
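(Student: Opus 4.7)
The plan is to carry out a straightforward complexity analysis of formulas \eqref{eq:grad}--\eqref{eq:Hess}, paying careful attention to the order in which matrix products are formed so as to avoid ever materialising a $U\times U$ matrix inverse. First I would briefly verify that \eqref{eq:grad}--\eqref{eq:Hess} really follow from the general formulas \eqref{eq:logdetgrad}--\eqref{eq:logdetHess} in the interpolant setting: the rank-one structure $\vE_u=\vp(\vt_u)\vp(\vt_u)^\T$ gives $\Lambda^*(\vS)=\diag(\vP\vS\vP^\T)$ at once, and for the Hessian a short computation shows that $(\Lambda(\vx)^{-1}\vE_v\Lambda(\vx)^{-1})\bullet\vE_u=(\vp(\vt_u)^\T\Lambda(\vx)^{-1}\vp(\vt_v))^2$, which is precisely the $(u,v)$ entry of $\bigl(\vP\Lambda(\vx)^{-1}\vP^\T\bigr)^{\circ 2}$.

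The bulk of the argument is the cost analysis. I would compute the $U\times U$ matrix $\vTheta\defeq\vP(\vP^\T\diag(\vx)\vP)^{-1}\vP^\T$ in the following stages:
\begin{enumerate}
\item Form the $L\times L$ matrix $\vA\defeq\vP^\T\diag(\vx)\vP$ by scaling the rows of $\vP$ by the entries of $\vx$ and multiplying; this takes $\Oh(L^2 U)$ operations and $\Oh(L^2)$ storage.
\item Compute a Cholesky factorisation $\vA=\vR^\T\vR$ in $\Oh(L^3)$ operations in place.
\item Solve $\vB\vR=\vP$ for the $U\times L$ matrix $\vB=\vP\vR^{-1}$ by $U$ triangular back-substitutions of size $L$, at a total cost of $\Oh(UL^2)$ and using $\Oh(UL)$ storage for $\vB$.
\item Form $\vTheta=\vB\vB^\T$ in $\Oh(U^2 L)$ operations; this is the only $\Oh(U^2)$ object produced.
\end{enumerate}
Once $\vTheta$ is available, $\nabla F(\vx)=-\diag(\vTheta)$ requires only $\Oh(U)$ work and $\nabla^2 F(\vx)=\vTheta^{\circ 2}$ is produced by an elementwise square in $\Oh(U^2)$ operations.

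Summing the stages and using $L\leq U$, the total arithmetic cost is $\Oh(L^3+L^2 U+UL^2+U^2L)=\Oh(LU^2)$, and the working memory beyond the $\Oh(U^2)$ needed to store the Hessian consists of $\vA$, $\vR$, and $\vB$, which together fit in $\Oh(LU)$ space. The $n=1$ specialisation is then immediate from $L=d+1$ and $U=2d+1$. I expect no real obstacle in the proof; the only pitfall to flag is that one must not compute $\Lambda(\vx)^{-1}$ directly (it does not exist as $\Lambda(\vx)\in\bS^L$ has no $U\times U$ inverse) and, more pragmatically, one must form $(\vP^\T\diag(\vx)\vP)^{-1}\vP^\T$ rather than $\vP(\vP^\T\diag(\vx)\vP)^{-1}$ followed by right-multiplication, in order to keep all intermediate matrices of size at most $U\times L$ before finally producing the $U\times U$ output.
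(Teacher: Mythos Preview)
Your proposal is correct and follows essentially the same approach as the paper: form $\Lambda(\vx)=\vP^\T\diag(\vx)\vP$, Cholesky-factor it, back-substitute against $\vP^\T$ to obtain an $L\times U$ factor, and then take the Gram matrix to get the $U\times U$ matrix whose diagonal and Hadamard square yield the gradient and Hessian. One small remark: your closing ``pitfall'' about $\Lambda(\vx)^{-1}$ not existing is misstated---$\Lambda(\vx)$ is an $L\times L$ positive definite matrix and certainly has an inverse; the genuine point (which your step-by-step computation already respects) is simply to avoid forming unnecessary $U\times U$ intermediates before the final product.
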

\begin{proof}
We may assume that the matrix $\vP$ has been computed (offline) and stored in advance, using $\Oh(LU)$ space. Then the $L\times L$ matrix $\Lambda(\vx)=\vP^\T\diag(\vx)\vP$ can be computed in $\Oh(L^2U)$ arithmetic operations. The Cholesky factorization $\Lambda(\vx)=\vL\vL^\T$ can be performed in $\Oh(L^3)$ arithmetic operations \cite[Lec.~23]{TrefethenBau1997}. 
Then the matrix $\vV\defeq\vL^{-1}\vP^\T$ can be computed solving $U$ triangular systems with a total of $\Oh(L^2U)$ arithmetic operations. The matrix $\vQ\defeq\vP\Lambda(\vx)^{-1}\vP^\T$ can now be computed with an additional $\Oh(LU^2)$ arithmetic operations using $\vQ=\vV^\T\vV$. From (\ref{eq:grad}-\ref{eq:Hess}), the gradient and Hessian of the barrier are the negative of the diagonal of $\vQ$ and the elementwise square of $\vQ$ respectively; these can be computed from $\vQ$ in $\Oh(U^2)$ arithmetic operations.
\end{proof}

The observation that the gradient and Hessian of the barrier function $F$ can be computed efficiently in the interpolant basis representation was made earlier in \cite{LofbergParrilo2004}. In the context of Algorithm~\ref{alg:SY}, Theorem~\ref{thm:interpolant-basis} shows that using this representation, the algorithmic bottleneck at each iteration is the computation of the predictor and corrector directions, which require $\Oh(U^3)$ arithmetic operations, and not the computation of the barrier Hessian, with requires only $\Oh(LU^2)$ arithmetic operations using the procedure outlined in the proof. \revise{Therefore, each iteration of Algorithm~\ref{alg:SY} can be performed using $\Oh(U^3)$ arithmetic operations. In contrast, each iteration of a standard primal-dual interior-point method applied to the usual semidefinite programming formulation of an optimization problem over   $\Sigma_{n,2d}^*$ requires $\Oh(L^6)$ arithmetic operations (see Section \ref{sec:complexity}).}

In the remainder, we mainly focus on the interpolant basis representation of the cone $\Sigma_{n,2d}$. Besides allowing the efficient computation of the gradient and Hessian of the barrier function $F$, this representation has two additional advantages.
First, the approach described above for evaluating the derivatives of $F$ can be generalized to the weighted case in a straightforward fashion (see Section~\ref{sec:WSOS}). Second, the interpolant basis representation of $\Sigma_{n,2d}$ is numerically well-conditioned for appropriate choices of the basis $\vp$ and the set of interpolation points $\cT$. We discuss the latter point further in the next section.

\section{Conditioning and Stability}\label{sec:stability}

Proposition~\ref{thm:Nesterov2000} allows for an infinite family of representations of $\Sigma_{n,2d}$ and $\Sigma_{n,2d}^*$, parameterized with the bases $\vp$ and $\vq$. Even after identifying $\vq$ as the Lagrange basis corresponding to a set $\cT=\{\vt_1,\ldots,\vt_U\}$ which is unisolvent for $\cV_{n,2d}$, there is still flexibility in selecting a final representation as both the basis $\vp$ and interpolation points $\cT$ can be chosen rather freely. The complexity of computing the barrier gradient and Hessian is independent of these choices, but the numerical conditioning of the representation naturally depends on them. In this section, we investigate how the selection of $\vp$ and $\cT$ affects the numerical stability of Algorithm~\ref{alg:SY} when solving SOS optimization problems represented using the interpolant basis.

The conditioning of optimization over $\Sigma_{n,2d}$ and $\Sigma_{n,2d}^*$ is closely related to the condition numbers of the linear operators $\Lambda$ and $\Lambda^*$ used in their semidefinite representation.
Recall that $\Lambda(\vx)=\sum_{u=1}^U \vE_ux_u$ for some matrices $\vE_1,\ldots,\vE_U\in\bS^L$. Then $\Lambda^*$ has the expression $\Lambda^*(\vS)=(\vE_u\bullet \vS)_{u=1,\ldots,U}$. Equivalently, $\Lambda^*(\vS) = \vLambda^\T\vec(\vS)$,
where $\vec(\vS)$ is the column vector obtained by ``stacking'' the columns of $\vS$, and $\vLambda$ is the $L^2 \times U$ matrix $\vLambda\defeq \big(\vec\big(\vE_u\big)\big)_{u=1,\ldots,U}$. Hence, the condition number of the operators $\Lambda$ and $\Lambda^*$ is the condition number of the matrices $\vLambda$ and $\vLambda^\T$.

Let $H(\vx)\defeq\nabla^2 F(\vx)$ denote the Hessian of the barrier function $F(\vx)=-\ln(\det(\Lambda(\vx)))$. Theorem~\ref{thm:lambdaHx} below shows that the condition number of $H(\vx)$ can be bounded from above using the condition numbers of $\vLambda$ and $\Lambda(\vx)$.
This analysis uses Lemma~\ref{thm:trAB}, which we present first. We let $\lambdamin(\vQ)$ and $\lambdamax(\vQ)$ denote the smallest and largest eigenvalues of a real symmetric matrix $\vQ$. We also let $\cond(\vQ)$ denote the condition number of a real matrix $\vQ$ relative to the $\ell_2$-norm. Recall that $\cond(\vQ)$ equals the ratio of the largest singular value of $\vQ$ to its smallest singular value.

\begin{lemma}\label{thm:trAB}
Let $\vA\in\bS^L_+$ and $\vB\in\bS^L$. Then
\[ \tr(\vA)\lambdamin(\vB) \leq \tr(\vA\vB) \leq \tr(\vA)\lambdamax(\vB). \]
\end{lemma}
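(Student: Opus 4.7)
The plan is to reduce the inequality to a statement about Rayleigh quotients by factoring the positive semidefinite matrix $\vA$. Since $\vA \in \bS^L_+$, we can write $\vA = \vC\vC^\T$ (e.g., via a Cholesky factorization or the positive semidefinite square root). Using the cyclic property of the trace, we obtain
\[
\tr(\vA\vB) = \tr(\vC\vC^\T\vB) = \tr(\vC^\T\vB\vC) = \sum_{i=1}^L \vc_i^\T \vB \vc_i,
\]
where $\vc_1,\ldots,\vc_L$ denote the columns of $\vC$.

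Next I would apply the standard Rayleigh-quotient bounds to each term: for every $i$,
\[
\lambdamin(\vB)\,\|\vc_i\|^2 \;\leq\; \vc_i^\T \vB \vc_i \;\leq\; \lambdamax(\vB)\,\|\vc_i\|^2.
\]
Summing these inequalities over $i$ and recognizing that $\sum_i \|\vc_i\|^2 = \tr(\vC^\T\vC) = \tr(\vC\vC^\T) = \tr(\vA)$ (again by cyclicity of the trace) yields the claim.

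There is no real obstacle here; the only minor point to address is that $\lambdamin(\vB)$ may be negative, so one should be careful that the Rayleigh bounds apply to an arbitrary symmetric $\vB$ (not just to positive semidefinite matrices). This is immediate, however, since $\vB$ has a real orthogonal diagonalization, and the Rayleigh bounds hold for any unit vector and therefore, after rescaling, for any vector $\vc_i$. An alternative equally short route would be to diagonalize $\vA = \vU\diag(d_1,\ldots,d_L)\vU^\T$ with $d_i \geq 0$, set $\vB' \defeq \vU^\T\vB\vU$, and use $\tr(\vA\vB) = \sum_i d_i \vB'_{ii}$ together with $\lambdamin(\vB) \leq \vB'_{ii} \leq \lambdamax(\vB)$ and $\sum_i d_i = \tr(\vA)$.
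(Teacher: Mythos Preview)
Your proof is correct. The paper itself states Lemma~\ref{thm:trAB} without proof, presumably treating it as a standard fact, so there is no ``paper's own proof'' to compare against; your argument via the factorization $\vA=\vC\vC^\T$ and Rayleigh-quotient bounds (or the alternative diagonalization of $\vA$) is exactly the kind of routine verification the authors are implicitly relying on.
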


\begin{theorem}\label{thm:lambdaHx}
Using the notation of Proposition~\ref{thm:Nesterov2000}, for every pair of bases $\vp$ of $\cV_{n,d}$ and $\vq$ of $\cV_{n,2d}$, and the corresponding operator $\Lambda$, one has
\begin{equation}\label{eq:lambdaHx}
\cond(H(\vx))\leq\cond(\vLambda)^2\cond(\Lambda(\vx))^2.
\end{equation}
\end{theorem}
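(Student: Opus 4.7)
The plan is to bound the Rayleigh quotient $\vw^\T H(\vx)\vw / \|\vw\|^2$ above and below in terms of the extreme singular values of $\vLambda$ and the extreme eigenvalues of $\Lambda(\vx)$, and then take the ratio. The starting point is the Hessian formula derived earlier, which gives the quadratic form
\[
\vw^\T H(\vx)\vw \;=\; \Lambda(\vw)\bullet\bigl(\Lambda(\vx)^{-1}\Lambda(\vw)\Lambda(\vx)^{-1}\bigr) \;=\; \tr\bigl(\Lambda(\vw)\,\Lambda(\vx)^{-1}\,\Lambda(\vw)\,\Lambda(\vx)^{-1}\bigr).
\]

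First I would apply Lemma~\ref{thm:trAB} with $\vA = \Lambda(\vw)\Lambda(\vx)^{-1}\Lambda(\vw)$ (which is symmetric positive semidefinite, being congruent to $\Lambda(\vx)^{-1}$) and $\vB = \Lambda(\vx)^{-1}$, obtaining
\[
\frac{\tr\!\bigl(\Lambda(\vw)\Lambda(\vx)^{-1}\Lambda(\vw)\bigr)}{\lambdamax(\Lambda(\vx))} \;\leq\; \vw^\T H(\vx)\vw \;\leq\; \frac{\tr\!\bigl(\Lambda(\vw)\Lambda(\vx)^{-1}\Lambda(\vw)\bigr)}{\lambdamin(\Lambda(\vx))}.
\]
Next I would apply Lemma~\ref{thm:trAB} a second time to $\tr(\Lambda(\vw)^2\Lambda(\vx)^{-1})$, using the cyclic property of the trace, with $\vA = \Lambda(\vw)^2\succcurlyeq\vzero$ and $\vB = \Lambda(\vx)^{-1}$. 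This yields
\[
\frac{\tr(\Lambda(\vw)^2)}{\lambdamax(\Lambda(\vx))} \;\leq\; \tr\!\bigl(\Lambda(\vw)\Lambda(\vx)^{-1}\Lambda(\vw)\bigr) \;\leq\; \frac{\tr(\Lambda(\vw)^2)}{\lambdamin(\Lambda(\vx))}.
\]
Chaining these two pairs of inequalities sandwiches $\vw^\T H(\vx)\vw$ between $\tr(\Lambda(\vw)^2)/\lambdamax(\Lambda(\vx))^2$ and $\tr(\Lambda(\vw)^2)/\lambdamin(\Lambda(\vx))^2$, which is exactly where the square on $\cond(\Lambda(\vx))$ in the theorem will come from.

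The last step is to relate $\tr(\Lambda(\vw)^2)$ to $\|\vw\|^2$ via $\vLambda$. Using the identification $\tr(\Lambda(\vw)^2) = \|\Lambda(\vw)\|_F^2 = \|\vec(\Lambda(\vw))\|_2^2 = \|\vLambda\vw\|_2^2$ and the singular value bounds $\sigma_{\min}(\vLambda)^2\|\vw\|^2 \leq \|\vLambda\vw\|^2 \leq \sigma_{\max}(\vLambda)^2\|\vw\|^2$, one obtains
\[
\frac{\sigma_{\min}(\vLambda)^2}{\lambdamax(\Lambda(\vx))^2}\,\|\vw\|^2 \;\leq\; \vw^\T H(\vx)\vw \;\leq\; \frac{\sigma_{\max}(\vLambda)^2}{\lambdamin(\Lambda(\vx))^2}\,\|\vw\|^2.
\]
Since $H(\vx)$ is symmetric positive definite, $\cond(H(\vx))$ equals the ratio of the largest to smallest Rayleigh quotient, giving the desired bound $\cond(H(\vx)) \leq \cond(\vLambda)^2\cond(\Lambda(\vx))^2$.

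There is no real obstacle here; the only mild subtlety is the double application of Lemma~\ref{thm:trAB}, which requires rewriting the four-factor trace so that one factor is manifestly PSD at each step. Once that bookkeeping is done the result drops out immediately.
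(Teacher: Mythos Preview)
Your proof is correct and follows essentially the same route as the paper's: both apply Lemma~\ref{thm:trAB} twice to the quadratic form $\vw^\T H(\vx)\vw=\tr(\Lambda(\vw)\Lambda(\vx)^{-1}\Lambda(\vw)\Lambda(\vx)^{-1})$, first with $\vA=\Lambda(\vw)\Lambda(\vx)^{-1}\Lambda(\vw)$ and then with $\vA=\Lambda(\vw)^2$, and finish using $\tr(\Lambda(\vw)^2)=\vw^\T\vLambda^\T\vLambda\vw$. The only cosmetic difference is that the paper derives the lower bound on $\lambdamin(H(\vx))$ explicitly and states the upper bound ``analogously,'' whereas you carry both inequalities through simultaneously.
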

\begin{proof}
From \eqref{eq:logdetHess}, note that $\vw^\T H(\vx)\vw=\tr(\Lambda(\vw)\Lambda(\vx)^{-1}\Lambda(\vw)\Lambda(\vx)^{-1})$. Applying Lemma~\ref{thm:trAB} repeatedly, we obtain
\begin{align*}
\vw^\T H(\vx)\vw&\geq\lambdamin(\Lambda(\vx)^{-1})\tr(\Lambda(\vw)\Lambda(\vx)^{-1}\Lambda(\vw))\\
&=\lambdamin(\Lambda(\vx)^{-1})\tr(\Lambda(\vw)^2\Lambda(\vx)^{-1})\\
&\geq\lambdamin(\Lambda(\vx)^{-1})^2\tr(\Lambda(\vw)^2)\\
&=\lambdamax(\Lambda(\vx))^{-2}\tr(\Lambda(\vw)^2).
\end{align*}
Now the observation that $\tr(\Lambda(\vw)^2)=\vw^\T\vLambda^\T\vLambda\vw\geq \lambdamin(\vLambda^\T\vLambda)\|\vw\|^2$ shows $\lambdamin(H(\vx)) \geq \tfrac{\lambdamin(\vLambda^\T\vLambda)}{\lambdamax(\Lambda(\vx))^2}$. Analogously, we get $\lambdamax(H(\vx)) \leq \tfrac{\lambdamax(\vLambda^\T\vLambda)}{\lambdamin(\Lambda(\vx))^2}$. Combining these inequalities and using the identity $\cond(\vLambda)^2=\tfrac{\lambdamax(\vLambda^\T\vLambda)}{\lambdamin(\vLambda^\T\vLambda)}$ yields our claim.
\end{proof}

Theorem~\ref{thm:lambdaHx} reveals the connection between the inherent conditioning of the chosen SOS representation and the conditioning of the barrier Hessian throughout the algorithm. It is expected that as the algorithm progresses and $\vx$ converges to the boundary of $\Sigma_{n,2d}^*$, the matrix $\Lambda(\vx)$ becomes increasingly singular and ill-conditioned. Theorem~\ref{thm:lambdaHx} shows that the condition number of the Hessian $H(\vx)$ increases proportionately to the square of the condition number of $\Lambda(\vx)$, and the proportionality constant is $\cond(\vLambda)^2$.

In the remainder of this section, we concentrate on the case where $\vq$ is a Lagrange basis. Recall that in this case $\Lambda(\vx)$ has the expression $\Lambda(\vx)=\vP^\T\diag(\vx)\vP$ where $\vP =(p_\ell(\vt_u))_{u=1,\ldots,U;\ell=1,\ldots,L}$. Furthermore, the corresponding $\vLambda$ matrix satisfies $\vLambda^\T\vLambda=\PPTsq$.

\subsection{Selection of interpolation points and bases}\label{sec:interpolation-points}

The question of how to choose good points for polynomial interpolation is a difficult but well-studied problem. A desirable property of interpolation points is that the constructed polynomial interpolants are not highly sensitive to the prescribed function values at the interpolation points. A natural measure of this sensitivity is the \emph{Lebesgue constant}, which is the condition number of the interpolation operator (mapping the function values at the interpolation points to the interpolating polynomial) with respect to the $\ell_\infty$-norm (see, e.g., \cite[Ch.~15]{Trefethen2013}). 
Explicit formulas for families of interpolation points that have asymptotically optimal Lebesgue constants for total-degree polynomial interpolation are known only for certain low-dimensional standard domains. Some prominent examples are \emph{Chebyshev points} on bounded intervals and \emph{Padua points} on rectangular domains \cite{CaliariDeMarchiVianello2005}. For univariate polynomial interpolation of degree $d$, Chebyshev points of the first kind are defined on $[-1,1]$ as
\begin{equation}\label{eq:ChebPts-1stkind}
\cC_{1,d} \defeq \left\{\cos((\ell+0.5)\pi/(d+1))\;|\;\ell=0,\ldots,d\right\},
\end{equation}
whereas Chebyshev points of the second kind are defined as
\begin{equation}\label{eq:ChebPts-2ndkind}
\cC_{2,d} \defeq \left\{\cos(\ell\pi/d)\;|\;\ell=0,\ldots,d\right\}.
\end{equation}
Let $\cC_{2,d}^E$ and $\cC_{2,d}^O$ denote the subsets of $\cC_{2,d}$ consisting of the Chebyshev points with even indices and odd indices respectively. For bivariate polynomial interpolation of (total) degree $d$, Padua points are defined on $[-1,1]^2$ as
\begin{equation}\label{eq:PaduaPts}
\cP_d \defeq \left(\cC_{2,d}^E\times\cC_{2,d+1}^O\right)\cup\left(\cC_{2,d}^O\times\cC_{2,d+1}^E\right).
\end{equation}
We note that both Chebyshev and Padua points can be adapted to arbitrary intervals and two-dimensional rectangular domains via an affine change of variables. The Lebesgue constant for degree-$d$ polynomial interpolation on $[-1,1]$ using Chebyshev points is of order $\Oh(\log d)$, and this growth is asymptotically optimal \cite[Ch.~15]{Trefethen2013}. Similarly, The Lebesgue constant for degree-$d$ polynomial interpolation on $[-1,1]^2$ using Padua points is $\Oh(\log^2 d)$, and this growth is asymptotically optimal \cite{BosCaliariDeMarchiVianelloXu2006}.

Another useful family of interpolation points is \emph{Fekete points}. Let $\vr=(r_1,\ldots,r_U)$ be a basis of $\cV_{n,2d}$. The Fekete points associated with a compact domain $\cD\subset\R^n$ are the points $\vt_1,\ldots,\vt_U\in\cD$ that maximize the absolute value of the determinant of the Vandermonde matrix $(r_\ell(\vt_u))_{u=1,\ldots,U;\ell=1,\ldots,U}$. Note that these points are independent of the basis choice $\vr$ because any basis change multiplies the determinant by a constant nonzero factor. While Fekete points are well-defined for any compact domain in any dimension, they are known analytically only for certain special domains such as the interval \cite{BosTaylorWingate2001} and are hard to compute numerically in general \cite{TaylorWingateVincent2000}.

For interpolation in higher dimensions and on more general domains,
one can compute \emph{approximate Fekete points}, following an approach due to Sommariva and Vianello \cite{SommarivaVianello2009}. The underlying idea of this approach is to first extract a large but finite number of candidate points from the domain and then to choose from these candidate points a subset that approximately maximizes the absolute value of the Vandermonde determinant. We now make this more concrete.
For $N\gg U$, let $\cT'=\{\vt_1,\ldots,\vt_N\}$ be a unisolvent set for $\cV_{n,2d}$, and let $\vr=(r_1,\ldots,r_U)$ be a basis of $\cV_{n,2d}$. The $N\times U$ Vandermonde matrix $\vV\defeq (r_u(\vt_i))_{i=1,\ldots,N;u=1,\ldots,U}$ has linearly independent columns because $\vr$ is linearly independent and $\cT'$ is unisolvent for $\cV_{n,2d}$. Ideally, we would like to find the $U\times U$ row submatrix of $\vV$ that has the largest absolute determinant. However, this problem in NP-hard in general \cite{Papadimitriou1984}; therefore, we resort to a greedy heuristic \cite{Khachiyan1995} to choose an appropriate subset of rows. The main step of this algorithm can be performed via QR factorization with column pivoting \cite{SommarivaVianello2009}.
The resulting Vandermonde submatrix is nonsingular, and the subset of points corresponding to the selected rows is automatically unisolvent for $\cV_{n,2d}$.

In this paper, Chebyshev points of the second kind and Padua points are used for interpolation on $[-1,1]$ and $[-1,1]^2$, respectively. We use the Matlab package Chebfun \cite{chebfun} to compute Chebyshev points and Padua2DM \cite{padua2dm} to compute Padua points. On the hypercube $[-1,1]^n$ for $n>2$, we generate approximate Fekete points for degree-$2d$ interpolation. In the definition of the Vandermonde matrix $\vV$ above, we let $r_1,\ldots,r_U\in\cV_{n,2d}$ be the multivariate Chebyshev polynomials of degree at most $2d$, and we let $\cT'$ be the product Chebyshev grid $\cC_{2,2d+1}\times\ldots\times\cC_{2,2d+n}$.

Once the interpolation points have been fixed, we choose a basis $\vp=(p_1,\ldots,p_L)$ of $\cV_{n,d}$ to ensure
that the resulting matrix $\vP$ has orthonormal columns. For this, we do not need to determine such a basis in closed form; orthonormalizing the columns of the matrix $(p_\ell'(\vt_u))_{u=1,\ldots,U;\ell=1,\ldots,L}$ associated with any basis $\vp'=(p_1',\ldots,p_L')$ produces a matrix $\vP$ of the desired type, and the polynomials $p_1,\ldots,p_L$ are then defined implicitly via their function values at the interpolation points.

\subsection{Initialization}

Depending on the chosen interpolant representation and the resulting matrix $\vP$, a good initial solution $\vz^0=(\bx^0,\vy^0,\bs^0)$ can help the matrix $\Lambda(\vx)=\vP^\T\diag(\vx)\vP$ and the Hessian $H(\vx)$ remain sufficiently well-conditioned for all but the last few iterations of Algorithm~\ref{alg:SY}. To inform our selection of the initial solution, we make use of Theorem~\ref{thm:lambdaHx}: for optimization over a single cone $\Sigma_{n,2d}$, assuming (without loss of generality) that the matrix $\vP$ has orthonormal columns, choosing $\vx^0=\delta\vone$ for some $\delta>0$ results in $\Lambda(\vx^0)=\delta\vI$. The latter matrix is positive definite, which ensures that $\vx^0\in(\Sigma_{n,2d}^*)^\circ$, and it has perfect conditioning, which means that $\vx^0$ minimizes the right-hand side of \eqref{eq:lambdaHx} for the given $\Lambda$.
Then the initial Hessian becomes $H(\vx^0)=\delta^{-2}\vLambda^\T\vLambda=\delta^{-2}\PPTsq$.
Given $\vx^0=\delta\vone$, we choose the remaining variables according to $\vs^0=-g(\vx^0)=-\delta^{-1}g(\vone)$, $\tau^0=\kappa^0=1$, and $\vy^0=\vzero$. Note that $\vs^0\in(\Sigma_{n,2d})^\circ$ because $\vx^0\in(\Sigma_{n,2d}^*)^\circ$ (see, e.g., \cite[Theorem~3.3.1]{Renegar2001}). Furthermore, because $F$ is an LHSCB with barrier parameter $\nu$, its gradient satisfies $\vx^\T g(\vx) = -\nu$ for every $\vx\in\cK^\circ$ (see, e.g., \cite[Theorem~2.3.9]{Renegar2001}), which yields that $\mu(\vz^0)=1$. Finally, straightforward arithmetic shows that $\psi(\vz^0)=0$ and $\vz^0\in\cN(0)$. Hence, $\vz^0$ is a fairly well-conditioned, strictly interior initial solution in the $0$-neighborhood of the central path.

In this framework, there is still flexibility with respect to the choice of $\delta>0$. In our implementation, we use $\delta=\sqrt{\delta_P\delta_D}$ where
\[
\delta_P=\max_{i=1,\ldots,k}\frac{1+|\ve_i^T\vb|}{1+|\ve_i^T\vA\vone|}\quad\text{and}\quad
\delta_D=\max_{j=1,\ldots,U}\frac{1+|\ve_j^Tg(\vone)|}{1+|\ve_j^T\vc|}.
\]

This choice produces an initial solution with moderate primal and dual infeasibility values.

\subsection{Comparison to the monomial basis}

The good conditioning of the interpolant basis representation of SOS constraints is in sharp contrast with their traditional representation in the monomial basis. Recall from Section~\ref{sec:monomial-basis} that for the monomial basis, $\Lambda(\vx)$ is the Hankel matrix of the vector $\vx$ in the univariate case, which is ill-conditioned even for polynomials of moderate degree, meaning that the Hessian cannot be computed accurately at any iteration of the algorithm.
Using the interpolant basis and an initial point as described above, the matrix $\Lambda(\vx)$ has a perfect condition number at the start of the algorithm, the initial Hessians are not much worse conditioned than $\PPTsq$, and with the appropriate choice of $\vP$, the computation of the predictor and corrector steps remains stable.

\subsection{Comparison to orthogonal bases}

In contrast with the monomial basis, the Chebyshev basis and other orthogonal bases are popular in numerical algorithms for high-degree polynomials. We shall show that for bases satisfying a discrete orthogonality condition, the interpolant basis representation of SOS polynomials (with appropriately chosen interpolation points) is not worse conditioned than the orthogonal basis representation.

Consider a basis $\hat\vq=(\hat{q}_1,\ldots,\hat{q}_U)$ of $\cV_{n,2d}$ and a set $\cT=\{\vt_1,\ldots,\vt_U\}$ which is unisolvent for $\cV_{n,2d}$. Suppose that $\hat\vq$ satisfies the discrete orthogonality conditions with respect to $\cT$:
\begin{equation}\label{eq:disc-ortho}
\sum_{u=1}^U \hat{q}_i(\vt_u)\hat{q}_j(\vt_u) = \begin{cases} 1 & \text{if } i = j,   \; 1\leq i,j\leq U \\
                                                              0 & \text{if } i \neq j,\; 1\leq i,j\leq U.\end{cases}
\end{equation}
Let $\vq$ be the Lagrange basis corresponding to $\cT$. Throughout this section, our primordial example is the case where $\hat\vq$ consists of the normalized Chebyshev polynomials of the first kind, defined as $\hat{T}_0(t) = \sqrt{\tfrac{1}{d+1}}$ and $\hat{T}_i(t) = \sqrt{\tfrac{2}{d+1}}T_i(t)$ for $i=1,\dots,d$, and $\cT$ is the set of Chebyshev points $\cC_{1,2d}$ defined in \eqref{eq:ChebPts-1stkind}. In this case, it is known that \eqref{eq:disc-ortho} holds (see, e.g., \cite[Sec.~4.6.1]{MasonHandscomb2003}).

As before, let $\vp$ be arbitrary; let $\Lambda$ denote the operator in Proposition~\ref{thm:Nesterov2000} corresponding to the bases $\vp$ and $\vq$, and let $\hat\Lambda$ denote the operator corresponding to $\vp$ and $\hat\vq$. Recall from the properties of the Lagrange basis that $\hat{q}_j = \sum_{u=1}^U\hat{q}_j(\vt_u)q_u$ for $j=1,\dots,U$. Using the Vandermonde matrix $\vV\defeq (\hat{q}_j(\vt_u))_{u=1,\ldots,U;j=1,\ldots,U}$, these equations can be written in matrix form as $\hat{\vq} = \vV^\T\vq$, and the definitions of $\Lambda$ and $\hat{\Lambda}$ yield $\vp\vp^\T = \Lambda(\vq) = \hat\Lambda(\hat\vq) = \hat\Lambda(\vV^\T\vq)$. Therefore, $\cond(\Lambda) \leq \cond(\hat{\Lambda})\cond(\vV)$. If, in addition, $\hat\vq$ satisfies the discrete orthogonality conditions \eqref{eq:disc-ortho}, then $\vV^\T\vV = \vI$ and the condition number of $\vV$ is one; hence, the inequality above simplifies to $\cond(\Lambda) \leq \cond(\hat{\Lambda})$.

\deletethis{
Consider a basis $\vr=(r_1,\ldots,r_U)$ of $\cV_{n,2d}$ that is different from the Lagrange basis $\vq$. For every pair $(i,j)\in\{1,\ldots,L\}^2$, there exists a unique $\vc_{i,j}\in\R^U$ that satisfies $p_ip_j=\vc_{i,j}^\T\vr$; let $\vC$ be the $L^2\times U$ matrix whose row indexed with $(i,j)$ is $\vc_{i,j}^\T$. By its definition, the matrix $\vC$ satisfies $\vC\vr=\vec(\vp\vp^\T)$. Therefore, according to Proposition~\ref{thm:Nesterov2000}, it defines a representation of the SOS cone as $\Sigma_{n,2d}=\{\vs\in\R^U\,|\,\vs=\vC^\T\vS,\,\vS\succcurlyeq\vzero\}$. Furthermore, based on the unisolvence of the chosen interpolation points, note that $\vr=\vR^\T\vq$ where $\vR\defeq(r_j(\vt_u))_{u=1,\ldots,U;j=1,\ldots,U}$. Together with $\vec(\vp\vp^\T)=\vLambda\vq$, this implies $(\vLambda-\vC\vR^\T)\vq=\vzero$, and because the polynomials $q_1,\ldots,q_U$ are linearly independent, we get $\vLambda^\T=\vR\vC^\T$. It follows that $\cond(\vLambda)\leq\cond(\vR)\cond(\vC)$.

Suppose now that the polynomials $r_1,\ldots,r_U$ are mutually orthogonal with respect to the interpolation points $\vt_1,\ldots,\vt_U$. That is, they satisfy the discrete orthogonality conditions
\begin{equation}
\sum_{u=1}^U r_i(\vt_u)r_j(\vt_u) =0\qquad\forall\;i\neq j.
\end{equation}
This implies that $\vR^\T\vR$ is diagonal. Hence, the condition number of $\vR$ equals the maximum ratio between the lengths of the columns of $\vR$, and there is a simpler relationship between the condition numbers of $\vLambda$ and $\vC$.
For instance, in the univariate case, suppose that the bases $\vp=(p_1,\ldots,p_{d+1})$ and $\vr=(r_1,\ldots,r_{2d+1})$ consist of Chebyshev polynomials of the first kind up to degrees $d$ and $2d$, respectively. As discussed previously in Section~\ref{sec:chebyshev-basis}, this representation of univariate SOS polynomials is appealing due to its favorable numerical properties. It is well-known that the Chebyshev polynomials $r_1,\ldots,r_{2d+1}$ are mutually orthogonal with respect to Chebyshev points of the first kind (defined in \eqref{eq:ChebPts-1stkind}): They satisfy \eqref{eq:disc-ortho} when $\cC_{1,2d}=\{\vt_1,\ldots,\vt_{2d+1}\}$. Furthermore, in this case one also has $\sum_{u=1}^{2d+1}r_i(\vt_u)^2=\rho_i$ where $\rho_1=2d+1$ and $\rho_i=d+1/2$ for $i>1$ (see, e.g., \cite[Sec.~4.6]{MasonHandscomb2003}).
Therefore, $\cond(\vR)=\sqrt{2}$ and $\cond(\vLambda)\leq\sqrt{2}\cond(\vC)$. This shows that, with an appropriate choice of interpolation points, the interpolant basis representation of univariate SOS polynomials is never much worse conditioned than their Chebyshev basis representation.
}

\section{Recovering Optimal SOS Decompositions}\label{sec:recovery}

Consider for simplicity the case where the problem \eqref{eq:CP-D} corresponds to an optimization problem over a single SOS cone ($\cK^*=\Sigma_{n,2d}$). Proposition~\ref{thm:alg-complex} indicates that Algorithm~\ref{alg:SY} terminates with a solution $\vz^*=(\bx^*,\vy^*,\bs^*)\in\cN(\eta)$ that satisfies the conditions \eqref{eq:alg-complex}, and if problems (\ref{eq:CP-P}-\ref{eq:CP-D}) are both feasible and have zero duality gap, then $\vx^*/\tau$ and $(\vy^*,\vs^*)/\tau$ are approximately optimal primal and dual solutions to (\ref{eq:CP-P}-\ref{eq:CP-D}), respectively. However, while the entries of $\vs^*\in\Sigma_{n,2d}$ are the coefficients of an SOS polynomial in the chosen basis $\vq$, an explicit SOS decomposition of this polynomial is not directly available from the output of Algorithm~\ref{alg:SY}. In this section, we describe how to construct a matrix $\vS^*\in\bS^L_{++}$ that satisfies $\Lambda^*(\vS^*)=\vs^*$ without recourse to semidefinite programming. An SOS decomposition for $\vs^*$ can then be obtained from the eigenvalue or Cholesky decomposition of $\vS^*$.

Recall that the Hessian of the logarithmic barrier function $-\ln(\det(\cdot))$ at the positive definite matrix $\vQ\in\bS^L_{++}$ is the linear operator $\vR\mapsto\vQ^{-1}\vR\vQ^{-1}$. This Hessian induces the local norm $\vR\mapsto\big\|\vQ^{-1/2}\vR\vQ^{-1/2}\big\|_F$ on the space $\bS^L$ (see Appendix~\ref{sec:LHSCB}). The self-concordance of the logarithmic barrier function implies that given $\vQ\in\bS^L_{++}$, every $\vS\in\bS^L$ that satisfies the inequality $\big\|\vQ^{-1/2}( \vS-\vQ)\vQ^{-1/2}\big\|_F<1$ is positive definite. Our next result makes use of this observation.


\begin{theorem}\label{thm:SOS-deco}
Let $\vx\in(\Sigma_{n,2d}^*)^\circ$ and $\vs\in\Sigma_{n,2d}^\circ$. Define the vector $\vw\defeq H(\vx)^{-1}\vs$ and the matrix $\vS\defeq\Lambda(\vx)^{-1}\Lambda(\vw)\Lambda(\vx)^{-1}$. Then $\Lambda^*(\vS) = \vs$. Furthermore, if \allowbreak $\big\|H(\vx)^{-1/2}(\vs+\delta g(\vx))\big\|<\delta$ for some $\delta>0$, then $\vS\succ\vzero$.
\end{theorem}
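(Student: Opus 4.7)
The plan is to prove the identity $\Lambda^*(\vS)=\vs$ first, as it is a one-line consequence of the Hessian formula already derived in the paper, and then to tackle the positive definiteness claim, which requires translating the scalar hypothesis into matrix language. For the identity, I would invoke the expression of the Hessian given just after \eqref{eq:logdetHess}, namely $H(\vx)\vw=\Lambda^*\bigl(\Lambda(\vx)^{-1}\Lambda(\vw)\Lambda(\vx)^{-1}\bigr)$ for every $\vw\in\R^U$. With $\vw\defeq H(\vx)^{-1}\vs$ and $\vS$ as in the statement, this immediately yields $\Lambda^*(\vS)=H(\vx)\vw=\vs$.

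For the positive definiteness of $\vS$, the strategy is to show that the congruent matrix $\vM\defeq\Lambda(\vx)^{-1/2}\Lambda(\vw)\Lambda(\vx)^{-1/2}$ is positive definite. Because $\vx\in(\Sigma_{n,2d}^*)^\circ$, the matrix $\Lambda(\vx)$ is positive definite, so $\vS=\Lambda(\vx)^{-1/2}\vM\Lambda(\vx)^{-1/2}\succ\vzero$ will follow from $\vM\succ\vzero$. I would first rewrite the hypothesis using the standard LHSCB identity $g(\vx)=-H(\vx)\vx$ (see Appendix~\ref{sec:LHSCB}), which gives $\vs+\delta g(\vx)=H(\vx)(\vw-\delta\vx)$, and therefore
\[ \big\|H(\vx)^{-1/2}(\vs+\delta g(\vx))\big\|^2=(\vw-\delta\vx)^\T H(\vx)(\vw-\delta\vx). \]
Next, I would use \eqref{eq:logdetHess} to derive the quadratic-form identity $\vv^\T H(\vx)\vv=\big\|\Lambda(\vx)^{-1/2}\Lambda(\vv)\Lambda(\vx)^{-1/2}\big\|_F^2$ for all $\vv\in\R^U$. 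Combined with the linearity of $\Lambda$, the hypothesis becomes $\|\vM-\delta\vI\|_F<\delta$. Since the spectral norm is dominated by the Frobenius norm, every eigenvalue of the symmetric matrix $\vM$ lies within distance $\delta$ of $\delta$, hence in $(0,2\delta)$, so $\vM\succ\vzero$. This last step is essentially the self-concordance argument highlighted immediately before the theorem, applied with $\delta\Lambda(\vx)$ as the positive definite reference point and $\Lambda(\vw)$ as the candidate matrix.

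The main obstacle is the bookkeeping that connects three distinct objects: the dual-cone vectors $\vs,\vw\in\R^U$, the Hessian $H(\vx)$ viewed as a quadratic form on $\R^U$, and the associated matrices living in $\bS^L$. The two pivotal translations are the logarithmic-homogeneity relation $g(\vx)=-H(\vx)\vx$ and the Frobenius-norm identity $\vv^\T H(\vx)\vv=\|\Lambda(\vx)^{-1/2}\Lambda(\vv)\Lambda(\vx)^{-1/2}\|_F^2$; both follow essentially for free from material already developed, but recognizing that together they turn the scalar hypothesis into a bounded symmetric perturbation of $\delta\vI$ is the conceptual step that makes the proof go through.
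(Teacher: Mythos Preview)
Your proof is correct and follows essentially the same route as the paper's. Both arguments establish $\Lambda^*(\vS)=\vs$ directly from the Hessian formula and then, via the identity $g(\vx)=-H(\vx)\vx$, reduce the hypothesis to the bound $\|\Lambda(\vx)^{-1/2}\Lambda(\vw)\Lambda(\vx)^{-1/2}-\delta\vI\|_F<\delta$; the paper phrases this as $\|\Lambda(\vx)^{1/2}(\vS-\delta\Lambda(\vx)^{-1})\Lambda(\vx)^{1/2}\|_F<\delta$ and invokes the self-concordance observation preceding the theorem, whereas you name the congruent matrix $\vM$ explicitly and finish with the elementary eigenvalue bound---but these are the same computation.
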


\begin{proof}
First note that $\Lambda(\vx)$ and $H(\vx)$ are both positive definite because $\vx\in(\Sigma_{n,2d}^*)^\circ$.
From (\ref{eq:logdetgrad}-\ref{eq:logdetHess}), recall that $\Lambda^*(\Lambda(\vx)^{-1})=-g(\vx)$ and $\Lambda^*\big(\Lambda(\vx)^{-1}\Lambda(\vw)\Lambda(\vx)^{-1}\big)=H(\vx)\vw$. Therefore, we have
\begin{equation*}
\Lambda^*(\vS)=\Lambda^*\left(\Lambda(\vx)^{-1}\Lambda(\vw)\Lambda(\vx)^{-1}\right)=H(\vx)\vw=\vs.
\end{equation*}
Now suppose $\big\|H(\vx)^{-1/2}(\vs+\delta g(\vx))\big\|<\delta$ for some $\delta>0$. We show that $\vS\succ\vzero$. From the definition of $\vS$, we have $\Lambda(\vx)\vS\Lambda(\vx)=\Lambda(\vw)$. Using this, we get
\begin{align*}
\left\|\Lambda(\vx)^{1/2}\left(\vS-\delta\Lambda(\vx)^{-1}\right)\Lambda(\vx)^{1/2}\right\|_F^2
&=\left(\Lambda(\vx)(\vS-\delta\Lambda(\vx)^{-1})\Lambda(\vx)\right)\bullet\left(\vS-\delta\Lambda(\vx)^{-1}\right)\\
&=\left(\Lambda(\vw)-\delta\Lambda(\vx)\right)\bullet\left(\Lambda(\vx)^{-1}(\Lambda(\vw)-\delta\Lambda(\vx))\Lambda(\vx)^{-1})\right)\\
&=(\vw-\delta\vx)^\T\Lambda^*\left(\Lambda(\vx)^{-1}(\Lambda(\vw)-\delta\Lambda(\vx))\Lambda(\vx)^{-1})\right)\\
&=(\vw-\delta\vx)^\T H(\vx)(\vw-\delta\vx)\\
&=(\vs+\delta g(\vx))^\T H(\vx)^{-1}(\vs+\delta g(\vx))\\
&=\left\|H(\vx)^{-1/2}(\vs+\delta g(\vx))\right\|^2<\delta^2.
\end{align*}
Finally, note that $\delta\Lambda(\vx)^{-1}\succ\vzero$ since $\Lambda(\vx)\succ\vzero$ and $\delta>0$.
Letting $\vQ=\delta\Lambda(\vx)^{-1}$, the discussion preceding this theorem now implies that $\vS\succ\vzero$ because $\big\|\delta^{-1/2}\Lambda(\vx)^{1/2}(\vS-\delta\Lambda(\vx)^{-1})\delta^{-1/2}\Lambda(\vx)^{1/2}\big\|_F< 1$.
\end{proof}

We will now show that the iterates of Algorithm \ref{alg:SY} satisfy the conditions of \mbox{Theorem \ref{thm:SOS-deco}}. Consider an iterate $\vz=(\bx,\vy,\bs)$ computed in the predictor or corrector phase. Given that $\vz\in\cN(\theta)$ for some $0\leq\theta<1$, it satisfies
\begin{equation}\label{eq:eta-nhd}
\begin{aligned}
&\left\|\bH(\bx)^{-1/2}(\bs+\mu(\vz)\bg(\bx))\right\|^2\\
&\hspace{2em}=\left\|H(\vx)^{-1/2}(\vs+\mu(\vz) g(\vx))\right\|^2+\tau^2(\kappa-\mu(\vz)/\tau)^2
\leq\theta^2\mu(\vz)^2.
\end{aligned}
\end{equation}
Now one can invoke Theorem~\ref{thm:SOS-deco} with $\delta=\mu(\vz)$ to construct a matrix $\vS\in\bS^L_{++}$ such that $\Lambda^*(\vS)=\vs$. To show that this assignment satisfies the requirements of Theorem~\ref{thm:SOS-deco}, we need to verify $\vx\in(\Sigma_{n,2d}^*)^\circ$, $\vs\in(\Sigma_{n,2d})^\circ$, $\mu(\vz)>0$, and $\big\|H(\vx)^{-1/2}(\vs+\mu(\vz) g(\vx))\big\|< \mu(\vz)$. The first three conditions are immediate; the last condition follows from \eqref{eq:eta-nhd} using $\mu(\vz)>0$ and $0\leq\theta<1$.

\begin{remark}
The matrix $\vS$ defined in Theorem~\ref{thm:SOS-deco} is the optimal solution to the equality-constrained least-squares problem
\begin{equation}\label{eq:SOS-deco-LS}
\begin{aligned}
&\minimize_{\vS\in\bS^L}\quad   && \left\|\Lambda(\vx)^{1/2}(\vS-\delta\Lambda(\vx)^{-1})\Lambda(\vx)^{1/2}\right\|_F^2\\
&\st\;\;                        && \Lambda^*(\vS) = \vs.
\end{aligned}
\end{equation}
\end{remark}

An alternative approach to computing a matrix $\vS\in\bS^L_{++}$ that satisfies $\Lambda^*(\vS) = \vs$ would be to minimize the distance $\|\vS-\delta\Lambda(\vx)^{-1}\|_F^2$ subject to $\Lambda^*(\vS) = \vs$. A similar least-squares problem was previously suggested in \cite{PeyrlParrilo2008,KaltofenLiYangZhi2008}. Its optimal solution $\vS^*$ is only guaranteed to be positive definite if $\|\vS^*-\delta\Lambda(\vx)^{-1}||_F<\delta\lambdamin(\Lambda(\vx)^{-1})$ (see \cite[Prop.~8]{PeyrlParrilo2008}). Instead, our least-squares problem \eqref{eq:SOS-deco-LS} minimizes a weighted Frobenius distance which has a natural interpretation in interior-point method theory. More importantly, the resulting solution is always guaranteed to be positive definite for solutions obtained from Algorithm~\ref{alg:SY}.

\section{Weighted Sum-of-Squares Polynomials}\label{sec:WSOS}

In this section, we turn our attention to optimization over WSOS cones and discuss how the results presented in earlier sections for SOS cones can be generalized to WSOS cones.
Recall that $\cV_{n,2\vd}^\vg$ is the space of polynomials $p$ for which there exist $s_1\in\cV_{n,2d_1},\ldots,s_m\in\cV_{n,2d_m}$ such that $p = \sum_{i=1}^m g_i s_i$. A polynomial $p\in\cV_{n,2\vd}^\vg$ belongs to $\Sigma_{n,2\vd}^\vg$ if there exist $s_1\in\Sigma_{n,2d_1},\ldots,s_m\in\Sigma_{n,2d_m}$ such that $p = \sum_{i=1}^m g_i s_i$. It is clear that $\Sigma_{n,2\vd}^\vg$ is a convex cone. Furthermore, $\Sigma_{n,2\vd}^\vg$ has the same dimension as $\cV_{n,2\vd}^\vg$ because each $\Sigma_{n,2d_i}$ is full-dimensional in $\cV_{n,2d_i}$. The next result characterizes when $\Sigma_{n,2\vd}^\vg$ is also pointed and closed, and thus a proper cone. We defer its proof to Appendix~\ref{sec:proofs}.

\begin{proposition}\label{thm:WSOSprops}
The cone $\Sigma_{n,2\vd}^\vg\subset\cV_{n,2\vd}^\vg$ is proper if and only if the following system is infeasible:
\begin{equation}\label{eq:WSOSprops}
\sum_{i=1}^m g_is_i=0,\qquad
\sum_{i=1}^m s_i\neq 0,\qquad
s_1\in\Sigma_{n,2d_1},\ldots,s_m\in\Sigma_{n,2d_m}.
\end{equation}
\end{proposition}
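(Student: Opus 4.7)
The plan is to recast $\Sigma_{n,2\vd}^\vg$ as the image of a proper cone under a linear map and reduce the statement to a standard closedness criterion. Let $\cK\defeq \Sigma_{n,2d_1}\times\cdots\times\Sigma_{n,2d_m}$, which is a proper cone in the finite-dimensional space $\cV_{n,2d_1}\times\cdots\times\cV_{n,2d_m}$, and let $\Phi:(s_1,\ldots,s_m)\mapsto \sum_{i=1}^m g_is_i$ be the corresponding linear map into $\cV_{n,2\vd}^\vg$, so that $\Sigma_{n,2\vd}^\vg=\Phi(\cK)$. Since each $s_i\in\Sigma_{n,2d_i}$ is globally nonnegative as a polynomial, the condition $\sum_{i=1}^m s_i\neq 0$ in \eqref{eq:WSOSprops} is simply the condition $(s_1,\ldots,s_m)\neq (0,\ldots,0)$. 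Consequently, infeasibility of \eqref{eq:WSOSprops} is equivalent to $\ker\Phi\cap\cK=\{0\}$. Because $\Sigma_{n,2\vd}^\vg$ is already known to be convex with nonempty interior, I only need to show that the translated condition is equivalent to $\Sigma_{n,2\vd}^\vg$ being simultaneously closed and pointed.

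For the easy direction, suppose \eqref{eq:WSOSprops} admits a solution with some $s_j\neq 0$. Setting $y\defeq g_js_j$, one checks immediately that both $y$ and $-y=\sum_{i\neq j}g_is_i$ lie in $\Sigma_{n,2\vd}^\vg$ (by choosing the $j$-th and the complementary entries as representations). Because $g_j$ and $s_j$ are nonzero and polynomials form an integral domain, $y\neq 0$; hence $\Sigma_{n,2\vd}^\vg$ is not pointed, and thus not proper.

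For the converse, assume $\ker\Phi\cap\cK=\{0\}$. Pointedness follows easily: given $y,-y\in\Sigma_{n,2\vd}^\vg$ with representations $y=\sum g_is_i$, $-y=\sum g_is_i'$ and $s_i,s_i'\in\Sigma_{n,2d_i}$, adding yields $\sum g_i(s_i+s_i')=0$ with each $s_i+s_i'\in\Sigma_{n,2d_i}$; the assumption forces $s_i+s_i'=0$, and pointedness of each $\Sigma_{n,2d_i}$ then gives $s_i=s_i'=0$, so $y=0$. Closedness is the main obstacle, and I plan to handle it by the standard ``recession'' argument: take $y_k=\Phi(x_k)\to y$ with $x_k\in\cK$, and argue by contradiction that the sequence $\{x_k\}$ is bounded. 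If $\|x_k\|\to\infty$, then, passing to a subsequence, $x_k/\|x_k\|\to u$ with $\|u\|=1$; closedness of $\cK$ gives $u\in\cK\setminus\{0\}$, while $\Phi(x_k)/\|x_k\|=y_k/\|x_k\|\to 0$ gives $\Phi(u)=0$, contradicting $\ker\Phi\cap\cK=\{0\}$. Boundedness of $\{x_k\}$ then yields a convergent subsequence $x_k\to x\in\cK$, so $y=\Phi(x)\in\Sigma_{n,2\vd}^\vg$, establishing closedness. The only delicate point in the whole argument is precisely this closedness step, where the kernel-cone condition is what rules out the otherwise possible escape to infinity in the representations.
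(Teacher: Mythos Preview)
Your proof is correct and follows essentially the same route as the paper's. Both arguments reduce the statement to the condition that the linear map $\Phi:(s_1,\ldots,s_m)\mapsto\sum_i g_is_i$ meets the product SOS cone only at the origin, and then derive pointedness and closedness from that condition; the only difference is that the paper invokes Corollary~9.1.3 of Rockafellar's \emph{Convex Analysis} for the closedness of $\Phi(\cK)$, whereas you spell out the standard recession/normalization argument explicitly.
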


The system \eqref{eq:WSOSprops} must be infeasible when the set $\cS=\{\vt\in\R^n\,|\,g_i(\vt)\geq 0\quad\forall i=1,\ldots,m\}$ is unisolvent for $\cV_{n,2\vd}^\vg$. Therefore, Proposition~\ref{thm:WSOSprops} implies that $\Sigma_{n,2\vd}^\vg\subset\cV_{n,2\vd}^\vg$ is proper in this case. The conclusion that $\Sigma_{n,2\vd}^\vg$ must be closed when $\cS$ is unisolvent for $\cV_{n,2\vd}^\vg$ can also be deduced from \cite[Thm.~3.1]{Marshall2003}.

\deletethis{
\subsection{Barrier functions for the dual WSOS cone}
}

Let $U\defeq\dim\cV_{n,2\vd}^\vg$ and $L_i\defeq\dim\cV_{n,d_i}={n+d_i \choose n}$ for $i=1,\ldots,m$. The next theorem generalizes Proposition~\ref{thm:Nesterov2000} to the weighted case and shows that $\Sigma_{n,2\vd}^\vg$ is semidefinite representable.

\begin{proposition}\cite[Thm.~17.6]{Nesterov2000}\label{thm:Nesterov2000WSOS}
Fix an ordered basis $\vq = (q_1,\ldots,q_U)$ of $\cV_{n,2\vd}^\vg$ and an ordered basis $\vp_i=(p_{i,1},\ldots,p_{i,L_i})$ of $\cV_{n,d_i}$ for $i=1,\ldots,m$. Let $\Lambda_i:\R^U\to\bS^{L_i}$ be the unique linear mapping satisfying $\Lambda_i(\vq)=g_i\vp_i\vp_i^\T$, and let $\Lambda_i^*$ denote its adjoint. Then $\vs\in\Sigma_{n,2\vd}^\vg$ if and only if there exist matrices $\vS_1\succcurlyeq\vzero,\ldots,\vS_m\succcurlyeq\vzero$ satisfying
\begin{equation*}
\vs = \sum_{i=1}^m\Lambda_i^*(\vS_i).
\end{equation*}
Additionally, the dual cone of $\Sigma_{n,2\vd}^\vg$ admits the characterization
\begin{equation*}
\Sigma_{n,2\vd}^{\vg\;\;\;*} = \left\{\vx\in\real^U\,|\,\Lambda_i(\vx)\succcurlyeq\vzero\quad\forall\, i=1,\ldots,m\right\}.
\end{equation*}
\end{proposition}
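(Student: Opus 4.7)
The plan is to reduce both assertions to the unweighted Proposition~\ref{thm:Nesterov2000} combined with standard conic duality. The key observation is that, by definition, a polynomial with coefficient vector $\vs$ in basis $\vq$ belongs to $\Sigma_{n,2\vd}^\vg$ if and only if $\sum_u s_u q_u = \sum_{i=1}^m g_i r_i$ for some $r_i \in \Sigma_{n,2d_i}$, so the $m$ summands can be handled independently.

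For the primal characterization, I would first establish the following weighted analog: the coefficient vector $\vu \in \R^U$ in basis $\vq$ of a polynomial of the form $g_i r_i$ with $r_i \in \Sigma_{n,2d_i}$ coincides with $\Lambda_i^*(\vS_i)$ for some $\vS_i \succcurlyeq \vzero$, and conversely. Indeed, Proposition~\ref{thm:Nesterov2000} applied to $\Sigma_{n,2d_i}$ yields $r_i = \vp_i^\T \vS_i \vp_i = \vS_i \bullet (\vp_i\vp_i^\T)$ for some $\vS_i \succcurlyeq \vzero$, so $g_i r_i = \vS_i \bullet (g_i \vp_i \vp_i^\T)$. Using the defining identity $\Lambda_i(\vq) = g_i \vp_i \vp_i^\T$ to expand each entry of $g_i \vp_i \vp_i^\T$ in the basis $\vq$, the coefficient vector of $g_i r_i$ in $\vq$ is precisely $(\Lambda_i(\ve_u) \bullet \vS_i)_{u=1}^U = \Lambda_i^*(\vS_i)$. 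Summing over $i=1,\ldots,m$ yields the primal characterization.

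The dual characterization then follows from conic duality applied to the Minkowski sum representation $\Sigma_{n,2\vd}^\vg = \sum_{i=1}^m \Lambda_i^*(\bS^{L_i}_+)$. Applying the general identity $\bigl(\sum_i \cC_i\bigr)^* = \bigcap_i \cC_i^*$ and computing each summand dual via $(\Lambda_i^*(\bS^{L_i}_+))^* = \{\vx : \Lambda_i(\vx) \bullet \vS \geq 0\ \forall\, \vS \succcurlyeq \vzero\} = \{\vx : \Lambda_i(\vx) \succcurlyeq \vzero\}$ (using self-duality of $\bS^{L_i}_+$ together with the definition of the adjoint) produces the stated intersection. The main subtlety is verifying that each $\Lambda_i$ is well-defined by the relation $\Lambda_i(\vq) = g_i \vp_i \vp_i^\T$: this requires every entry $g_i p_{i,j} p_{i,k}$ to lie in $\cV_{n,2\vd}^\vg$, which is immediate since $p_{i,j} p_{i,k} \in \cV_{n,2d_i}$ and $\vq$ is a basis of $\cV_{n,2\vd}^\vg$. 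Note that the Minkowski-sum duality identity requires only convexity of the summands, so the dual formula holds without any closedness or properness hypothesis on $\Sigma_{n,2\vd}^\vg$ itself.
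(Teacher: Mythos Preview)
The paper does not give its own proof of this proposition; it is stated with a citation to \cite[Thm.~17.6]{Nesterov2000} and treated as a known result. Your argument is correct and is essentially the standard derivation: the primal characterization unwinds the definition of $\Sigma_{n,2\vd}^\vg$ term by term via Proposition~\ref{thm:Nesterov2000}, and the dual characterization follows from the Minkowski-sum identity $(\sum_i \cC_i)^* = \bigcap_i \cC_i^*$ together with self-duality of the positive semidefinite cone. Your check that each $\Lambda_i$ is well-defined (i.e., that $g_i p_{i,j} p_{i,k} \in \cV_{n,2\vd}^\vg$) and your remark that the dual formula needs no closedness hypothesis are both accurate.
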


Proposition~\ref{thm:Nesterov2000WSOS} describes the cone $\Sigma_{n,2\vd}^{\vg\;\;\;*}$ with $m$ linear matrix inequalities. Therefore, as before with the SOS cone, one can obtain an LHSCB for $\Sigma_{n,2\vd}^{\vg\;\;\;*}$ from restrictions of the logarithmic barrier function for the positive semidefinite cone. Let $F_i(\vx)\defeq -\ln(\det(\Lambda_i(\vx)))$ for $i=1,\ldots,m$. Then $F\defeq\sum_{i=1}^m F_i$ is an LHSCB for $\Sigma_{n,2\vd}^{\vg\;\;\;*}$ with barrier parameter at most $\sum_{i=1}^m L_i$ (see \cite[Thm.~2.3.1 and 2.3.9]{Renegar2001}). Furthermore, the gradient and Hessian of each $F_i$ admits characterizations analogous to (\ref{eq:logdetgrad}-\ref{eq:logdetHess}):
Let $\vE_{i1},\ldots,\vE_{iU}\in\bS^{L_i}$ be such that $\Lambda_i(\vx)=\sum_{u=1}^U\vE_{iu}x_u$. Then
\begin{align*}
\frac{\partial F_i}{\partial x_u}(\vx) &= -\Lambda_i(\vx)^{-1} \bullet \vE_{iu}, \qquad u=1,\ldots,U,\\
\frac{\partial^2 F_i}{\partial 	x_u \partial x_v}(\vx) &=
\left(\Lambda_i(\vx)^{-1}\vE_{iv}\Lambda_i(\vx)^{-1}\right)\bullet\vE_{iu}, \qquad u,v = 1, \ldots, U.
\end{align*}

The expressions for the gradient and Hessian of each $F_i$ can again be simplified when polynomials are represented via interpolation. For this, consider a set $\{\vt_1,\ldots,\vt_U\}\subset\R^n$ which is unisolvent for $\cV_{n,2\vd}^\vg$, and let $\vq$ be its corresponding Lagrange basis. Then each matrix $\vE_{iu}$ takes the form $\vE_{iu}=g_i(\vt_u)\vp_i(\vt_u)\vp_i(\vt_u)^\T$, and each operator $\Lambda_i$ becomes
\[ \Lambda_i(\vx) = \vP_i^\T\diag(\vg_i\circ\vx)\vP_i, \]
where $\vg_i\defeq (g_i(\vt_1),\ldots,g_i(\vt_U))$ and $\vP_i\defeq(p_{i,\ell}(\vt_u))_{u=1,\ldots,U;\ell=1,\ldots,L_i}$. Moreover, the gradient and Hessian of $F_i$ simplify into
\begin{align*}
\nabla F_i(\vx) &= -\vg_i\circ \diag\left(\vP_i(\vP_i^\T\diag(\vg_i\circ\vx)\vP_i)^{-1}\vP_i^\T\right),\\
\nabla^2 F_i(\vx) &= \left(\vg_i\vg_i^\T\right) \circ \left(\vP_i(\vP_i^\T\diag(\vg_i\circ\vx)\vP_i)^{-1}\vP_i^\T\right)^{\circ 2}.
\end{align*}
It is now possible to show, as in Theorem~\ref{thm:interpolant-basis}, that the gradient and Hessian of $F_i$ can be computed in $\Oh(L_iU^2)$ arithmetic operations using these formulas.

For an optimization problem over the cone $\Sigma_{n,2\vd}^\vg$, assuming for simplicity that all $d_i$'s have the same value $d$ and letting $L={n+d\choose n}$ and $U=\dim \cV_{n,2\vd}^\vg$, each iteration of Algorithm~\ref{alg:SY} requires $\Oh(U^3)$ arithmetic operations to compute the predictor and corrector directions and $\Oh(mLU^2)$ arithmetic operations to compute the Hessian of the barrier function $F$ in the interpolant basis representation. Therefore, each iteration of Algorithm~\ref{alg:SY} can be performed in $\Oh(mLU^2+U^3)$ arithmetic operations. \revise{In contrast, each iteration of a standard primal-dual interior-point method applied to the usual semidefinite programming formulation of the same problem requires $\Oh(mL^6)$ arithmetic operations (see Section \ref{sec:complexity}).}



For every iterate $\vz=(\bx,\vy,\bs)$ computed in Algorithm~\ref{alg:SY}, an explicit WSOS decomposition for the polynomial corresponding to $\vs$ can be recovered as in Section~\ref{sec:recovery}. For this, we need the following generalization of Theorem~\ref{thm:SOS-deco}. Its proof is similar to the proof of Theorem~\ref{thm:SOS-deco}.

\begin{theorem}\label{thm:WSOS-deco}
Let $\vx\in\big({\Sigma_{n,2\vd}^{\vg\;\;\;*}}\big)^\circ$ and $\vs\in\big(\Sigma_{n,2\vd}^\vg\big)^\circ$. Define the vector $\vw\defeq H(\vx)^{-1}\vs$ and the matrices $\vS_i\defeq\Lambda_i(\vx)^{-1}\Lambda_i(\vw)\Lambda_i(\vx)^{-1}$ for $i=1,\ldots,m$. Then $\sum_{i=1}^m \Lambda_i^*(\vS_i) = \vs$. Furthermore, if $\big\|H(\vx)^{-1/2}(\vs+\delta g(\vx))\big\|<\delta$ for some $\delta>0$, then $\vS_i\succ\vzero$ for $i=1,\ldots,m$.
\end{theorem}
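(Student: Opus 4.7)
The plan is to mirror the structure of the proof of Theorem \ref{thm:SOS-deco}, with the main new ingredient being the additive decomposition $F=\sum_{i=1}^m F_i$ of the barrier along with the corresponding gradient and Hessian identities.

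First, I would prove the equality $\sum_{i=1}^m \Lambda_i^*(\vS_i)=\vs$. From the formulas for the gradient and Hessian of $F_i$ given immediately after Proposition \ref{thm:Nesterov2000WSOS}, we have $\Lambda_i^*(\Lambda_i(\vx)^{-1})=-\nabla F_i(\vx)$ and $\Lambda_i^*(\Lambda_i(\vx)^{-1}\Lambda_i(\vw)\Lambda_i(\vx)^{-1})=H_i(\vx)\vw$ where $H_i\defeq\nabla^2 F_i$. Summing over $i$ and using $H(\vx)=\sum_{i=1}^m H_i(\vx)$ yields $\sum_{i=1}^m\Lambda_i^*(\vS_i)=H(\vx)\vw=H(\vx)H(\vx)^{-1}\vs=\vs$.

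Next, to show each $\vS_i\succ\vzero$, I would repeat the weighted-Frobenius-norm calculation from Theorem \ref{thm:SOS-deco} one block at a time. Using $\Lambda_i(\vx)\vS_i\Lambda_i(\vx)=\Lambda_i(\vw)$ and the identity $\Lambda_i^*(\Lambda_i(\vx)^{-1}\Lambda_i(\vu)\Lambda_i(\vx)^{-1})=H_i(\vx)\vu$ applied to $\vu=\vw-\delta\vx$, an expansion entirely analogous to the SOS case yields
\begin{equation*}
\left\|\Lambda_i(\vx)^{1/2}(\vS_i-\delta\Lambda_i(\vx)^{-1})\Lambda_i(\vx)^{1/2}\right\|_F^2 = (\vw-\delta\vx)^\T H_i(\vx)(\vw-\delta\vx).
\end{equation*}
Summing the right-hand side over $i$ collapses to $(\vw-\delta\vx)^\T H(\vx)(\vw-\delta\vx)$. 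At this point I would invoke the LHSCB identity $H(\vx)\vx=-g(\vx)$, so $H(\vx)(\vw-\delta\vx)=\vs+\delta g(\vx)$ and therefore $(\vw-\delta\vx)^\T H(\vx)(\vw-\delta\vx)=\|H(\vx)^{-1/2}(\vs+\delta g(\vx))\|^2<\delta^2$ by hypothesis. Since each term in the sum on the left is nonnegative, each individual term is bounded by $\delta^2$.

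Finally, since $\Lambda_i(\vx)\succ\vzero$ (because $\vx\in(\Sigma_{n,2\vd}^{\vg\;\;*})^\circ$) and $\delta>0$, the matrix $\delta\Lambda_i(\vx)^{-1}$ is positive definite, and the self-concordance inequality for the $-\ln\det$ barrier recalled just before Theorem \ref{thm:SOS-deco} immediately forces $\vS_i\succ\vzero$, exactly as in the unweighted case. The main obstacle I anticipate is purely bookkeeping: making sure the telescoping step that converts $\sum_i (\vw-\delta\vx)^\T H_i(\vx)(\vw-\delta\vx)$ into $\|H(\vx)^{-1/2}(\vs+\delta g(\vx))\|^2$ is done cleanly, and that the LHSCB logarithmic homogeneity identity $H(\vx)\vx=-g(\vx)$ for the composite barrier $F=\sum_i F_i$ is explicitly cited. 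No genuinely new ingredients beyond the SOS proof appear to be needed.
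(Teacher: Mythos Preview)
Your proposal is correct and follows essentially the same route as the paper, which simply states that the proof is ``similar to the proof of Theorem~\ref{thm:SOS-deco}.'' The only additional observation you need beyond the unweighted case is exactly the one you identify: since each $H_i(\vx)$ is positive semidefinite, the nonnegative terms $(\vw-\delta\vx)^\T H_i(\vx)(\vw-\delta\vx)$ sum to $(\vw-\delta\vx)^\T H(\vx)(\vw-\delta\vx)<\delta^2$, so each individual term is strictly less than $\delta^2$, and the self-concordance argument applies blockwise.
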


Given an iterate $\vz=(\bx,\vy,\bs)$ of Algorithm~\ref{alg:SY}, positive definite matrices $\vS_1,\ldots,\vS_m$ satisfying $\sum_{i=1}^m \Lambda_i^*(\vS_i) = \vs$ can be computed using Theorem~\ref{thm:WSOS-deco}. A WSOS decomposition for the polynomial corresponding to $\vs$ can then be obtained from an eigenvalue or Cholesky decomposition of these matrices.

\section{Numerical Illustration}\label{sec:experiments}

This section reports the results of numerical experiments with a simple Matlab implementation of our approach to confirm the theoretical predictions of Sections~\ref{sec:barriers} and \ref{sec:stability}. Before we proceed to discuss these experiments, we discuss some details regarding our implementation of Algorithm~\ref{alg:SY}. Our Matlab code for this implementation and the numerical experiments below is available for download at \url{https://github.com/dpapp-github/alfonso}.

\subsection{Implementation details}

Our results in Section~\ref{sec:SY} and the analysis of the Skajaa--Ye algorithm in \cite{PappYildiz2017corrigendum} provide theoretically safe choices for the parameters of Algorithm~\ref{alg:SY}, which comprise the (fixed) predictor and corrector step lengths $\alpha_p$ and $\alpha_c$, the number $r_c$ of corrector steps to take in each corrector phase, and the sizes $\beta$ and $\eta$ of the neighborhoods where the iterates must remain at the end of the predictor and corrector phases respectively. The analysis of the algorithm remains valid if the fixed step length in the predictor phase is replaced with a line search. Specifically, starting from the ``safe'' fixed step length, we can search for the (approximately) largest step length $\alpha_p$ for which the iterate $\vz$ after the predictor phase remains in $\cN(\beta)$. Similarly, in the corrector phase, we need not always take $r_c$ steps; instead, we can check after each corrector step whether $\vz$ is already back in the neighborhood $\cN(\eta)$ and terminate the corrector phase if so. Both of these changes can improve the practical efficiency of the method without affecting its theoretical complexity; we included both of them in our implementation. For the remaining parameters, we used the values $\eta=0.0305$, $\beta=0.2387$, $\alpha_c=1$, $r_c=4$. Note that these parameters are generic values derived from the revised complexity analysis of Algorithm~\ref{alg:SY} in \cite{PappYildiz2017corrigendum}; they have not been tuned for the problems in this section or even for SOS optimization.

\subsection{Polynomial envelopes}\label{sec:envelope}

In this section we present numerical experiments which demonstrate the stability of our approach for SOS optimization problems with high-degree polynomials and discuss its practical advantages over solving the equivalent semidefinite programs.

\subsubsection{Problem description}

For our experiments in this section, we consider a family of optimization problems that was also studied in \cite{Papp2017} in the univariate case: given polynomials $f_1,\dots,f_k \in\cV_{n,\delta}$ and a set $\cS\subset\R^n$ defined as in \eqref{eq:domain},
find the polynomial $f\in\cV_{n,2d}$ that provides the closest lower approximation of $\min(f_1,\dots,f_k)$ on $\cS$, where the minimum is understood pointwise. Formally, we would like to compute the optimal solution to
\begin{equation}\label{eq:envelope}
\begin{aligned}
&\maximize_{f\in\cV_{n,2d}} &&\; \int_\cS f(\vt) d\vt\\
&\st\, &&\; f(\vt)\leq f_j(\vt)\;\; \forall\,\vt\in \cS\quad j=1,\dots,k.
\end{aligned}
\end{equation}
In this problem, the decision variable is the polynomial $f$, and the constraints require that the polynomial $f_j-f$ is nonnegative on $\cS$ for $j=1,\ldots,k$. Let $\deg(g_i)$ denote the degree of $g_i$. Assuming $\delta\leq 2d$, these constraints can be approximated with the requirements that the polynomial $f_j-f$ belongs to $\Sigma_{n,2\vd}^\vg$ for some $\vd=(d_1,\ldots,d_m)$ such that $2d_i+\deg(g_i)\leq 2d$ for $i=1,\ldots,m$.

Once a basis $\vq$ has been fixed for $\cV_{n,2d}$, the polynomial $f$ can be expressed as $f=\sum_{u=1}^U y_uq_u$ for some $\vy=(y_1,\ldots,y_U)$, and the objective function becomes $\int_\cS f(\vt) d\vt=\sum_{u=1}^U w_uy_u$ where $w_u\defeq\int_\cS q_u(\vt) d\vt$ for $u=1,\ldots,U$. Similarly, each polynomial $f_j$ can be expressed as $f_j=\sum_{u=1}^U y_{ju}q_u$ for some $\vy_j=(y_{j1},\ldots,y_{jU})$. In this notation, the SOS approximation to \eqref{eq:envelope} can be stated as
\begin{equation}\label{eq:envelope-SOS}
\begin{aligned}
&\maximize_{\vy\in\R^U} &&\; \sum_{u=1}^U w_uy_u\\
&\st\, &&\; \vy_j-\vy\in\Sigma_{n,2\vd}^\vg\quad j=1,\dots,k.
\end{aligned}
\end{equation}


\deletethis{
Assuming $\delta\leq 2d$, all polynomials involved can be represented as Lagrange interpolants on the same $U={n+2d\choose n}$ points. The decision variables are the function values $p(\vt_\ell)$ at the interpolation points $t_\ell$ for $\ell=1,\dots,U$. The nonnegativity of the polynomials $f_j(t)-f(t)$ can be approximated by these polynomials belonging to the cone $\Sigma_{n,2\vd}^\vg$.
}

In the examples below, $k=2$, $\delta=5$, and $\cS=[-1,1]^n$. The weights in the WSOS constraints are $g_j(\vt)=1-t_j^2$ for $j=1,\dots,n$ and $g_{n+1}(\vt)=1$, and the degrees are $d_j=d-1$ for $j=1,\ldots,n$ and $d_{n+1}=d$. Using Proposition~\ref{thm:WSOSprops}, it is easily verified that $\Sigma_{n,2\vd}^\vg$ (and hence its dual) is a proper cone. The exact semidefinite representation of $\Sigma_{n,2\vd}^\vg$ depends on the bases chosen for $\cV_{n,d_1},\ldots,\cV_{n,d_m}$, as described in Proposition~\ref{thm:Nesterov2000WSOS}. Below we experiment with the various choices discussed earlier in the paper.

\subsubsection{Results}\leavevmode

\textbf{Direct SOS optimization versus SDP in the interpolant basis.}
In \cite{Papp2017}, the problem \eqref{eq:envelope-SOS} was solved for $n=1$ and increasing values of $d$, using semidefinite programming and the interpolant basis representation described in Section~\ref{sec:interpolant-basis}. The basis $\vq$ was chosen as the Lagrange basis corresponding to Chebyshev points of the second kind (see \eqref{eq:ChebPts-2ndkind}).
It was found that even for $k=2$ and $\delta=5$, the largest instance for which the semidefinite programming formulation could be solved was approximately $2d=1100$ (the precise limit depending on the solver) before the solvers ran out of 32GB of memory. It was also reported that none of the tested solvers (SeDuMi \cite{sedumi}, SDPT3 \cite{sdpt3}, and CSDP \cite{csdp}) reported any numerical errors even for the highest degrees. In our first experiment, we compare this approach against optimizing directly over WSOS cones using Algorithm~\ref{alg:SY}.

\deletethis{
To this end, we first need to formulate the dual problem of \eqref{eq:envelope-SOS}:
\begin{equation}
\begin{split}
\minimize_\vx   &\quad \sum_{j,u} f_j(t_u)x_{ju}\\
\st             &\quad \sum_j x_{ju}=w_u\quad \forall\,u=1,\ldots,U,\\
                &\quad (x_{j1},\dots,x_{jU}) \in (\Sigma_{n,2\vd}^\vg)^* \quad j=1,\dots,k\\
\end{split}\label{eq:envelope-dual}
\end{equation}
with the weight vector $\vg=((1-t_1^2),\dots,(1-t_n^2),1)$ and degree vector $2\vd=(2d-2,2d-2,\dots,2d-2,2d)$.
Using Proposition~\ref{thm:WSOSprops} it is easily verified that $\Sigma_{n,2\vd}^\vg$ (and hence $(\Sigma_{n,2\vd}^\vg)^*$) is a proper convex cone. The gradient and Hessian of an LHSCB for the dual cone can be computed efficiently following Theorem~\ref{thm:interpolant-basis} and the discussion in Section~\ref{sec:WSOS}. The semidefinite programming formulation of \eqref{eq:envelope-dual} is standard, and can be derived from Proposition~\ref{thm:Nesterov2000WSOS}; we omit the details, which can also be found in \cite{Papp2017}.}

To solve the semidefinite programs, we used Mosek version 8.1.0.30 \cite{mosek} in addition to the solvers mentioned above. The performance of Mosek was at least as good as the performance of the other solvers in all instances; therefore, we report only the results obtained using Mosek here. All solvers were interfaced via Matlab R2016a. All computational results were obtained on a standard desktop computer equipped with 32GB RAM and a 4 GHz Intel Core i7 processor with 4 cores.

The results for $n=1$ are summarized in Table~\ref{tbl:wsos-vs-SDP-1D}. \deletethis{The primal infeasibility, dual infeasibility, and relative gap tolerances for both Mosek and Algorithm~\ref{alg:SY} were set to $10^{-8}$.} While the largest instance that we could solve with Mosek was the one with polynomials of degree $2d=1000$, we had no difficulty scaling our approach to $2d=10000$. Additionally, as expected, optimizing directly over WSOS cones is orders of magnitude faster than solving the corresponding semidefinite programs, even for the smaller instances. No numerical errors were reported by either solver, and high-accuracy solutions were returned by both: the relative primal and dual infeasibility and the relative duality and complementarity gaps stayed below $10^{-8}$ for our approach and below $10^{-7}$ for Mosek.

\begin{table}
	\centering
	\begin{tabular}{rrrrrrrrrr}
		\toprule
		\multirow{2}{*}{$d$} & \multirow{2}{*}{$U$} & \multicolumn{2}{c}{\# of variables} & \multicolumn{2}{c}{\# of iterations} & \multicolumn{2}{c}{time/iteration [s]} & \multicolumn{2}{c}{solver time [s]} \\
		        &                     & SOS & SDP & SOS & SDP & SOS & SDP & SOS & SDP\\
		\midrule
100 & 201 & 402 & 20402 & 51 & 9 & 0.02 & 0.92 & 1.23 & 8.32 \\
200 & 401 & 802 & 80802 & 60 & 11 & 0.08 & 14.97 & 4.73 & 164.70 \\
300 & 601 & 1202 & 181202 & 70 & 11 & 0.23 & 82.92 & 15.89 & 912.08 \\
400 & 801 & 1602 & 321602 & 72 & 12 & 0.49 & 257.40 & 35.22 & 3088.75 \\
500 & 1001 & 2002 & 502002 & 76 & 9 & 0.95 & 661.14 & 72.27 & 5950.27 \\
600 & 1201 & 2402 & 722402 & 78 &  & 1.34 &  & 104.83 &  \\
800 & 1601 & 3202 & 1283202 & 81 &  & 2.97 &  & 240.60 &  \\
1000 & 2001 & 4002 & 2004002 & 84 &  & 5.10 &  & 428.78 &  \\
1200 & 2401 & 4802 & 2884802 & 90 &  & 8.45 &  & 760.06 &  \\
1400 & 2801 & 5602 & 3925602 & 93 &  & 12.02 &  & 1117.46 &  \\
1600 & 3201 & 6402 & 5126402 & 107 &  & 17.89 &  & 1914.46 &  \\
1800 & 3601 & 7202 & 6487202 & 94 &  & 24.12 &  & 2267.36 &  \\
2000 & 4001 & 8002 & 8008002 & 107 &  & 32.53 &  & 3481.08 &  \\
2200 & 4401 & 8802 & 9688802 & 103 &  & 38.84 &  & 4000.06 &  \\
2400 & 4801 & 9602 & 11529602 & 105 &  & 48.80 &  & 5124.18 &  \\
2600 & 5201 & 10402 & 13530402 & 108 &  & 68.42 &  & 7389.41 &  \\
2800 & 5601 & 11202 & 15691202 & 125 &  & 101.11 &  & 12639.31 &  \\
3000 & 6001 & 12002 & 18012002 & 118 &  & 119.09 &  & 14052.45 &  \\
4000 & 8001 & 16002 & 32016002 & 141 &  & 217.27 &  & 30634.95 &  \\
5000 & 10001 & 20002 & 50020002 & 135 &  & 441.03 &  & 59538.97 &  \\
	\bottomrule\\
	\end{tabular}
	\caption{Solver statistics from our approach and from the SDP-based approach using the interpolant basis representation for problem \eqref{eq:envelope-SOS}. In the column titles, SOS denotes direct optimization over the WSOS cone using Algorithm~\ref{alg:SY}, whereas SDP denotes the solution of the equivalent semidefinite program using Mosek. Results are shown for instances with $n=1$, $k=2$, $\delta=5$, and increasing degrees $d$. Missing entries indicate that Mosek ran out of 32GB of memory while solving the problem.}
	\label{tbl:wsos-vs-SDP-1D}
\end{table}

\deletethis{
\begin{table}
	\centering
	\begin{tabular}{rrrrrrrrr}
		\toprule
		\multirow{2}{*}{$d$} & \multicolumn{2}{c}{\# of variables} & \multicolumn{2}{c}{\# of iters.} & \multicolumn{2}{c}{time/iter [s]} & \multicolumn{2}{c}{solver time [s]} \\
		        & SOS & SDP & SOS & SDP & SOS & SDP & SOS & SDP\\
		\midrule
100 & 402 & 20402 & 51 & 9 & 0.02 & 0.92 & 1.23 & 8.32 \\
200 & 802 & 80802 & 60 & 11 & 0.08 & 14.97 & 4.73 & 164.70 \\
300 & 1202 & 181202 & 70 & 11 & 0.23 & 82.92 & 15.89 & 912.08 \\
400 & 1602 & 321602 & 72 & 12 & 0.49 & 257.40 & 35.22 & 3088.75 \\
500 & 2002 & 502002 & 76 & 9 & 0.95 & 661.14 & 72.27 & 5950.27 \\
600 & 2402 & 722402 & 78 &  & 1.34 &  & 104.83 &  \\
800 & 3202 & 1283202 & 81 &  & 2.97 &  & 240.60 &  \\
1000 & 4002 & 2004002 & 84 &  & 5.10 &  & 428.78 &  \\
1200 & 4802 & 2884802 & 90 &  & 8.45 &  & 760.06 &  \\
1400 & 5602 & 3925602 & 93 &  & 12.02 &  & 1117.46 &  \\
1600 & 6402 & 5126402 & 107 &  & 17.89 &  & 1914.46 &  \\
1800 & 7202 & 6487202 & 94 &  & 24.12 &  & 2267.36 &  \\
2000 & 8002 & 8008002 & 107 &  & 32.53 &  & 3481.08 &  \\
2200 & 8802 & 9688802 & 103 &  & 38.84 &  & 4000.06 &  \\
2400 & 9602 & 11529602 & 105 &  & 48.80 &  & 5124.18 &  \\
2600 & 10402 & 13530402 & 108 &  & 68.42 &  & 7389.41 &  \\
2800 & 11202 & 15691202 & 125 &  & 101.11 &  & 12639.31 &  \\
3000 & 12002 & 18012002 & 118 &  & 119.09 &  & 14052.45 &  \\
4000 & 16002 & 32016002 & 141 &  & 217.27 &  & 30634.95 &  \\
5000 & 20002 & 50020002 & 135 &  & 441.03 &  & 59538.97 &  \\
	\bottomrule\\
	\end{tabular}
	\caption{Solver statistics from our approach and from the SDP-based approach using the interpolant basis representation for problem \eqref{eq:envelope-SOS}. In the column titles, SOS denotes direct optimization over the WSOS cone using Algorithm~\ref{alg:SY}, whereas SDP denotes the solution of the equivalent semidefinite program using Mosek. Results are shown for instances with $n=1$, $k=2$, $\delta=5$, and increasing degrees $d$. Missing entries indicate that Mosek ran out of 32GB of memory while solving the problem.}
	\label{tbl:wsos-vs-SDP-1D}
\end{table}
}

The results are qualitatively similar for $n=2$ using the interpolant basis representation corresponding to Padua points \eqref{eq:PaduaPts}, and for $n=3$ using the interpolant basis representation corresponding to approximate Fekete points; see Tables~\ref{tbl:wsos-vs-SDP-2D}--\ref{tbl:wsos-vs-SDP-3D}. While the semidefinite programming formulations could not be solved with the available memory for $2000$ or more monomials, our SOS approach scales to more than ten thousand monomials. \deletethis{One reason for this difference is the number of variables used in the two approaches. More importantly, while the matrices $\vE_{iu}=g_i(\vt_u)\vp_i(\vt_u)\vp_i(\vt_u)^\T$ appear in the linear equality constraints of the semidefinite programming reformulations (and hence in their Newton systems), they are only needed in the computation of the barrier gradient and Hessian in Algorithm~\ref{alg:SY}. Because these matrices are dense, this leads to a significant difference in the memory requirements of the two approaches.} We did encounter some stalling with our implementation for the larger instances in the bivariate case ($n=2$), but the relative primal and dual infeasibility and the relative duality and complementarity gaps of the returned solutions stayed below $10^{-7}$.

\deletethis{The primal infeasibility, dual infeasibility, and relative gap tolerances were set to $10^{-8}$ in the instances with $n=2$ and to $10^{-6}$ in the instances with $n=3$.}

\begin{table}
	\centering
	\begin{tabular}{rrrrrrrrrr}
		\toprule
		\multirow{2}{*}{$d$} & \multirow{2}{*}{$U$} & \multicolumn{2}{c}{\# of variables} & \multicolumn{2}{c}{\# of iterations} & \multicolumn{2}{c}{time/iteration [s]} & \multicolumn{2}{c}{solver time [s]} \\
		        &                     & SOS & SDP & SOS & SDP & SOS & SDP & SOS & SDP\\
		\midrule
10 & 231 & 462 & 10582 & 71 & 10 & 0.03 & 0.40 & 2.02 & 4.04 \\
15 & 496 & 992 & 47672 & 97 & 12 & 0.16 & 8.30 & 15.25 & 99.64 \\
20 & 861 & 1722 & 142212 & 118 & 12 & 0.70 & 87.73 & 82.67 & 1052.75 \\
25 & 1326 & 2652 & 335452 & 146 & 14 & 2.17 & 560.60 & 317.53 & 7848.33 \\
30 & 1891 & 3782 & 679892 & 150 &  & 4.93 &  & 740.22 &  \\
35 & 2556 & 5112 & 1239282 & 161 &  & 9.81 &  & 1578.62 &  \\
40 & 3321 & 6642 & 2088622 & 180 &  & 20.83 &  & 3748.92 &  \\
45 & 4186 & 8372 & 3314162 & 194 &  & 37.58 &  & 7289.79 &  \\
50 & 5151 & 10302 & 5013402 & 193 &  & 56.10 &  & 10828.25 &  \\
55 & 6216 & 12432 & 7295092 & 210 &  & 97.22 &  & 20416.14 &  \\
60 & 7381 & 14762 & 10279232 & 220 &  & 145.78 &  & 32070.97 &  \\
65 & 8646 & 17292 & 14097072 & 240 &  & 230.99 &  & 55436.59 &  \\
70 & 10011 & 20022 & 18891112 & 247 &  & 334.07 &  & 82515.00 &  \\
		\bottomrule\\
	\end{tabular}
	\caption{Solver statistics from our approach and from the SDP-based approach using the interpolant basis representation for problem \eqref{eq:envelope-SOS}. Results are shown for instances with $n=2$, $k=2$, $\delta=5$, and increasing degrees $d$. Missing entries indicate that Mosek ran out of 32GB of memory while solving the problem.}
	\label{tbl:wsos-vs-SDP-2D}
\end{table}

\begin{table}
	\centering
	\begin{tabular}{rrrrrrrrrr}
		\toprule
		\multirow{2}{*}{$d$} & \multirow{2}{*}{$U$} & \multicolumn{2}{c}{\# of variables} & \multicolumn{2}{c}{\# of iterations} & \multicolumn{2}{c}{time/iteration [s]} & \multicolumn{2}{c}{solver time [s]} \\
		        &                     & SOS & SDP & SOS & SDP & SOS & SDP & SOS & SDP\\
		\midrule
6 & 455 & 1365 & 24822 & 61 & 10 & 0.10 & 1.77 & 6.20 & 17.66 \\
8 & 969 & 2907 & 106425 & 72 & 10 & 0.71 & 36.42 & 51.46 & 364.18 \\
10 & 1771 & 5313 & 341913 & 94 & 9 & 3.51 & 495.86 & 330.38 & 4462.70 \\
12 & 2925 & 8775 & 909090 & 103 &  & 11.86 &  & 1221.48 &  \\
14 & 4495 & 13485 & 2108340 & 121 &  & 35.76 &  & 4326.45 &  \\
16 & 6545 & 19635 & 4409919 & 135 &  & 96.75 &  & 13061.05 &  \\
18 & 9139 & 27417 & 8508675 & 153 &  & 234.40 &  & 35863.41 &  \\
20 & 12341 & 37023 & 15386448 & 171 &  & 565.67 &  & 96729.43 &  \\
	\bottomrule\\
	\end{tabular}
	\caption{Solver statistics from our approach and from the SDP-based approach using the interpolant basis representation for problem \eqref{eq:envelope-SOS}. Results are shown for instances with $n=3$, $k=2$, $\delta=5$, and increasing degrees $d$. Missing entries indicate that Mosek ran out of 32GB of memory while solving the problem.}
	\label{tbl:wsos-vs-SDP-3D}
\end{table}

We emphasize that our approach is implemented naively in Matlab without optimizing the code for speed. In contrast, Mosek is an industry-grade implementation that uses advanced heuristic strategies for greater efficiency and stability.

\deletethis{In contrast, Mosek is a leading commercial optimizer that is implemented in C and that uses advanced heuristic strategies to exploit problem structure.}

\deletethis{Furthermore, a more sophisticated implementation of our approach is likely to require fewer iterations and roughly the same amount of time per iteration. Therefore, we believe that the time per iteration is a more relevant indicator of the potential speedup than the total time.}

\textbf{Direct SOS optimization versus SDP in the monomial and Chebyshev bases.}
The conventional representation of polynomials in SOS optimization using \mbox{Proposition~\ref{thm:Nesterov2000}} does not rely on interpolants; instead, polynomials are typically represented with their coefficients in the monomial basis. This is implemented by choosing the monomial basis for both bases $\vp$ and $\vq$ (Section~\ref{sec:monomial-basis}). For problems involving WSOS polynomials whose domains are rectangular boxes, one can also make a case for the Chebyshev basis representation (Section~\ref{sec:chebyshev-basis}). In this section we compare the efficiency and stability of these approaches to our method.

\begin{table}
	\centering
	\begin{tabular}{rrrrrrr}
	\toprule
	\multirow{2}{*}{$d$} & \multicolumn{2}{c}{\# of iterations} & \multicolumn{2}{c}{time/iteration [s]} & \multicolumn{2}{c}{solver time [s]}\\
	                     & SOS & SDP & SOS & SDP & SOS & SDP\\
	\midrule	
100 & 51 & 11 & 0.02 & 0.09 & 1.23 & 1.04 \\
200 & 60 & 11 & 0.08 & 0.68 & 4.73 & 7.47 \\
300 & 70 & 11 & 0.23 & 2.75 & 15.89 & 30.28 \\
400 & 72 & 13 & 0.49 & 8.04 & 35.22 & 104.49 \\
500 & 76 & 13 & 0.95 & 19.26 & 72.27 & 250.35 \\
600 & 78 & 13 & 1.34 & 39.00 & 104.83 & 506.98 \\
800 & 81 & 14 & 2.97 & 116.83 & 240.60 & 1635.61 \\
1000 & 84 & 14 & 5.10 & 280.51 & 428.78 & 3927.14 \\
1200 & 90 & 16 & 8.45 & 727.39 & 760.06 & 11638.27 \\
1400 & 93 & 18 & 12.02 & 1291.06 & 1117.46 & 23239.08 \\
1600 & 107 & 15 & 17.89 & 2330.95 & 1914.46 & 34964.20 \\
1800 & 94 & 17 & 24.12 & 3824.21 & 2267.36 & 65011.61 \\
2000 & 107 & 16 & 32.53 & 6084.72 & 3481.08 & 97355.58 \\
2200 & 103 & 17 & 38.84 & 9189.42 & 4000.06 & 156220.10 \\
2400 & 105 & 15 & 48.80 & 12371.34 & 5124.18 & 185570.11 \\
2600 & 108 & 14 & 68.42 & 18611.52 & 7389.41 & 260561.32 \\
2800 & 125 &  & 101.11 &  & 12639.31 &  \\
3000 & 118 &  & 119.09 &  & 14052.45 &  \\
4000 & 141 &  & 217.27 &  & 30634.95 &  \\
5000 & 135 &  & 441.03 &  & 59538.97 &  \\
	\bottomrule\\
	\end{tabular}
	\caption{Solver statistics from our approach and from the SDP-based approach using the Chebyshev basis representation for problem \eqref{eq:envelope-SOS}. Results are shown for instances with $n=1$, $k=2$, $\delta=5$, and increasing degrees $d$. Missing entries indicate that Mosek needed more than 300000 seconds to return a solution.}
	\label{tbl:interpolants-vs-SDP_std-1D}
\end{table}

\begin{table}
	\centering
	\begin{tabular}{rrrrrrr}
	\toprule
	\multirow{2}{*}{$d$} & \multicolumn{2}{c}{\# of iterations} & \multicolumn{2}{c}{time/iteration [s]} & \multicolumn{2}{c}{solver time [s]}\\
	                     & SOS & SDP & SOS & SDP & SOS & SDP\\
	\midrule	
10 & 71 & 17 & 0.03 & 0.05 & 2.02 & 0.91 \\
15 & 97 & 16 & 0.16 & 0.37 & 15.25 & 5.87 \\
20 & 118 & 19 & 0.70 & 2.27 & 82.67 & 43.19 \\
25 & 146 & 18 & 2.17 & 11.49 & 317.53 & 206.77 \\
30 & 150 & 18 & 4.93 & 46.16 & 740.22 & 830.92 \\
35 & 161 & 17 & 9.81 & 139.68 & 1578.62 & 2374.55 \\
40 & 180 & 18 & 20.83 & 400.51 & 3748.92 & 7209.18 \\
45 & 194 & 19 & 37.58 & 1067.34 & 7289.79 & 20279.43 \\
50 & 193 & 19 & 56.10 & 2563.38 & 10828.25 & 48704.21 \\
55 & 210 & 18 & 97.22 & 6329.91 & 20416.14 & 113938.43 \\
60 & 220 & 20 & 145.78 & 14450.37 & 32070.97 & 289007.38 \\
65 & 240 &  & 230.99 &  & 55436.59 &  \\
70 & 247 &  & 334.07 &  & 82515.00 &  \\
	\bottomrule\\
\end{tabular}
\caption{Solver statistics from our approach and from the SDP-based approach using the Chebyshev basis representation for problem \eqref{eq:envelope-SOS}. Results are shown for instances with $n=2$, $k=2$, $\delta=5$, and increasing degrees $d$. Missing entries indicate that Mosek needed more than 300000 seconds to return a solution.}
    \label{tbl:interpolants-vs-SDP_std-2D}
\end{table}

\begin{table}
	\centering
	\begin{tabular}{rrrrrrr}
	\toprule
	\multirow{2}{*}{$d$} & \multicolumn{2}{c}{\# of iterations} & \multicolumn{2}{c}{time/iteration [s]} & \multicolumn{2}{c}{solver time [s]}\\
	                     & SOS & SDP & SOS & SDP & SOS & SDP\\
	\midrule
6 & 61 & 10 & 0.10 & 0.14 & 6.20 & 1.38 \\
8 & 72 & 12 & 0.71 & 0.89 & 51.46 & 10.70 \\
10 & 94 & 14 & 3.51 & 6.35 & 330.38 & 88.94 \\
12 & 103 & 14 & 11.86 & 42.79 & 1221.48 & 599.10 \\
14 & 121 & 15 & 35.76 & 221.75 & 4326.45 & 3326.32 \\
16 & 135 & 15 & 96.75 & 926.23 & 13061.05 & 13893.51 \\
18 & 153 & 16 & 234.40 & 3293.34 & 35863.41 & 52693.48 \\
20 & 171 & 15 & 565.67 & 13328.76 & 96729.43 & 199931.41 \\
	\bottomrule\\
\end{tabular}
\caption{Solver statistics from our approach and from the SDP-based approach using the Chebyshev basis representation for problem \eqref{eq:envelope-SOS}. Results are shown for instances with $n=3$, $k=2$, $\delta=5$, and increasing degrees $d$.}
	\label{tbl:interpolants-vs-SDP_std-3D}
\end{table}

Tables~\ref{tbl:interpolants-vs-SDP_std-1D}--\ref{tbl:interpolants-vs-SDP_std-3D} compare our approach against the semidefinite programming formulation in the Chebyshev basis for $n=1,2,3$ dimensions. The results reveal several interesting conclusions.
As expected, the Chebyshev basis outperforms the interpolant basis in terms of solution times when both bases are used in the semidefinite programming formulation; this is due to the fact that the solvers can exploit the sparsity of the matrices $\vE_{iu}$ in the Chebyshev basis representation. This sparsity also allows the SDP-based approach to scale to higher degrees with the Chebyshev basis representation than it does with interpolants. Regardless, our approach is more efficient than semidefinite programming even when the latter is used with the Chebyshev basis, and the improvement in running times increases with increasing degrees. In our largest experiment, our SOS approach achieved an over 30-fold speedup over the SDP-based approach using the Chebyshev basis representation.

We also investigated the performance of the solvers SeDuMi, SDPT3, and CSDP on the semidefinite programming formulation in the Chebyshev basis. While the performances of all three solvers were similar, they required longer solution times than Mosek. Furthermore, SeDuMi and SDPT3 reported numerical problems in some of the high-degree instances. The relative infeasibility of the returned solutions were above $10^{-7}$ for SeDuMi and $10^{-6}$ for SDPT3. 

The experiments using the standard semidefinite programming formulation in the monomial basis were unsuccessful: although Mosek did return an ``optimal solution'' of the semidefinite programs and reported no numerical errors, the returned solutions had relative complementarity gaps above $10^{-4}$ in all instances, even in the univariate case. Additionally, the computed ``optimal'' objective function values did not exhibit the expected monotonicity with respect to $d$, and they did not seem to converge to the true integral of $\min(f_1,f_2)$ on $\cS=[-1,1]^n$. \deletethis{The optimal values were also markedly different from the optimal values obtained using the Chebyshev and interpolant basis representations for $d\geq 10$.} Hence, the accuracy of these solutions is far below the accuracy of the solutions obtained in our other experiments. Moreover, in all instances Mosek needed about twice as many iterations with the monomial basis representation than with the Chebyshev basis representation; this was also likely a consequence of the poor conditioning of the monomial basis representation.

We repeated these experiments with the monomial basis representation using the semidefinite programming solvers SeDuMi, SDPT3, and CSDP; the results were qualitatively similar, indicating that the problem is indeed the conditioning of the formulation, and not the stability of the solvers. All three solvers reported numerical problems in most of the instances, and only returned inaccurate solutions after at least as many iterations as they needed to compute accurate solutions using the Chebyshev basis. This verifies and expands on similar results reported in \cite{Papp2017}.

In summary, these experiments confirm that the monomial basis is not suitable for the representation of polynomials in these problems; both the Chebyshev and interpolant bases are clearly superior choices. Additionally, although the Chebyshev basis is a better choice than the interpolant basis in the SDP-based approach, optimizing directly over WSOS cones (using the interpolant basis representation) is superior to semidefinite programming even when the semidefinite programs are formulated using the Chebyshev basis representation and solved with a solver that exploits their sparsity.

\subsection{Polynomial optimization}\label{sec:poly-opt}

In some applications of SOS optimization, one is interested in finding rational or algebraic solutions whose feasibility can be rigorously certified \cite{BachocVallentin2008,BallingerBlekhermanCohnGiansiracusaKellySchurmann2009,Vallentin2016}. While floating-point implementations can compute numerical solutions up to any accuracy, these solutions are never exact and cannot be used directly as certificates in these applications \cite{PeyrlParrilo2008,KaltofenLiYangZhi2008}. In this section, we show that our floating-point implementation of Algorithm~\ref{alg:SY} produces high-accuracy solutions which can be used to obtain rigorous rational certificates of tight global lower bounds for polynomial minimization problems. A \emph{Mathematica} notebook computing and verifying in rational arithmetic WSOS certificates using Theorem \ref{thm:WSOS-deco} is available as a supplementary material from \url{https://github.com/dpapp-github/certificates}.

\subsubsection{Problem description}

In this section we consider the polynomial minimization problem \eqref{eq:poly-opt}.
In other words, we would like to compute the largest $y\in\R$ such that $f(\vt)-y\geq 0$ for all $\vt\in\cS$.
This constraint can be approximated with the condition that $f(\vt)-y$ belongs to $\Sigma_{n,2\vd}^\vg$ for some $\vd=(d_1,\ldots,d_m)$ such that $2d_i+\deg(g_i)\geq\deg(f)$ for $i=1,\ldots,m$.
Choosing $\vq$ as the Lagrange basis corresponding to a set $\{\vt_1,\ldots,\vt_U\}\in\R^n$ which is unisolvent for $\cV_{n,2d}$,  the SOS approximation to \eqref{eq:poly-opt} becomes
\begin{equation}\label{eq:poly-opt-SOS}
\begin{aligned}
&\maximize_{y\in\R} &&\; y\\
&\st\, &&\; \left(f(\vt_u)-y\right)_{u=1,\ldots,U}\in\Sigma_{n,2\vd}^\vg.
\end{aligned}
\end{equation}
Its dual problem is
\begin{equation}\label{eq:poly-opt-SOS-dual}
\begin{aligned}
&\minimize_{\vx\in\R^U} &&\; \sum_{u=1}^U f(\vt_u)x_u\\
&\st\, &&\; \vone^\T\vx=1,\\
&&&\; \vx\in\Sigma_{n,2\vd}^{\vg\;\;\;*}.
\end{aligned}
\end{equation}

For our experiments, we use three test problems that are frequently used in the literature (see \cite{RayNataraj2009} and the references therein). In each example, $\cS=\prod_{j=1}^n [\ell_j,u_j]$.

\textit{Example 1.} Minimize Butcher's polynomial
$f(t_1,t_2,t_3,t_4,t_5,t_6)= t_6t_2^2 + t_5t_3^2 - t_1t_4^2 + t_4^3 + t_4^2 - 1/3 t_1 + 4/3 t_4$ over the hyper-rectangle $\cS=[-1,0]\times[-0.1,0.9]\times[-0.1,0.5]\times[-1,-0.1]\times[-0.1,-0.05]\times[-0.1,-0.03]$.

\textit{Example 2.} Minimize Caprasse's polynomial
$f(t_1,t_2,t_3,t_4)= -t_1t_3^3 + 4t_2t_3^2t_4 + 4t_1t_3t_4^2 + 2t_2t_4^3 + 4t_1t_3 + 4t_3^2 - 10t_2t_4 - 10t_4^2 + 2$ over the hyper-rectangle $\cS=[-0.5,0.5]^4$.

\textit{Example 3.} Minimize the 7-variable \deletethis{animal} magnetism polynomial
$f(t_1,t_2,t_3,t_4,t_5,t_6,t_7)= t_1^2 + 2t_2^2 + 2t_3^2 + 2t_4^2 + 2t_5^2 + 2t_6^2 + 2t_7^2 - t_1$ over the hyper-rectangle $\cS=[-1,1]^7$.

\deletethis{
Caprasse's polynomial
$f(t_1,t_2,t_3,t_4)= -t_1t_3^3 + 4t_2t_3^2t_4 + 4t_1t_3t_4^2 + 2t_2t_4^3 + 4t_1t_3 + 4t_3^2 - 10t_2t_4 - 10t_4^2 + 2$,
Butcher's polynomial
$f(t_1,t_2,t_3,t_4,t_5,t_6)= t_6t_2^2 + t_5t_3^2 - t_1t_4^2 + t_4^3 + t_4^2 - 1/3 t_1 + 4/3 t_4$,
and the 7-variable magnetism polynomial
$f(t_1,t_2,t_3,t_4,t_5,t_6,t_7)= t_1^2 + 2t_2^2 + 2t_3^2 + 2t_4^2 + 2t_5^2 + 2t_6^2 + 2t_7^2 - t_1$.
Each polynomial is associated with a domain $\cS=\prod_{j=1}^n [\ell_j,u_j]$. 
The standard choices are $\vell=(-0.5, -0.5, -0.5, -0.5)$ and $\vu=(0.5, 0.5, 0.5, 0.5)$ for Caprasse's polynomial, $\vell=(-1, -0.1, -0.1, -1, -0.1, -0.1)$ and $\vu=(0, 0.9, 0.5, -0.1, -0.05, -0.03)$ for Butcher's polynomial, and $\vell=(-1, -1, -1, -1, -1, -1, -1)$ and $\vu=(1, 1, 1, 1, 1, 1, 1)$ for the magnetism polynomial.
}
We use the weights $g_j(\vt)=(u_j-t_j)(t_j-\ell_j)$ for $j=1,\dots,n$, and $g_{n+1}(\vt)=1$ in the WSOS constraints. The corresponding degrees are $d_j=\lceil\deg(f)/2\rceil-1$ for $j=1,\dots,n$ and $d_{n+1}=\lceil\deg(f)/2\rceil$.

\subsubsection{Results}

The solution $\vt^*=(0, 0.9, 0.5, 0, -0.1, -0.1)$ is a global minimizer of Butcher's polynomial on its standard domain and achieves the optimal value $\text{OPT}=-2159/1500$. In this section, we describe how our floating-point implementation of Algorithm~\ref{alg:SY} can be used with additional post-processing in exact arithmetic to certify the global lower bound $\text{LB}=\text{OPT}-10^{-18}$. For these experiments, we use rational interpolation points, and we choose each basis $\vp_i$ as the basis of Chebyshev polynomials of the first kind up to degree $d_i$ for $i=1,\dots,n+1$.

To compute global lower bound certificates, we first solve \eqref{eq:poly-opt-SOS} in floating-point arithmetic. We then round the resulting approximate solution for \eqref{eq:poly-opt-SOS-dual} into a rational vector $\vx$. We also let $\vs=\left(f(\vt_u)-\text{LB}\right)_{u=1,\ldots,U}$. Note that $\vs$ is rational given that $\text{LB}$ and the interpolation points are rational. We then use Theorem~\ref{thm:WSOS-deco} to compute in exact arithmetic the matrices $\vS_1,\dots,\vS_{n+1}$ corresponding to the weights $g_1,\dots,g_{n+1}$; these matrices are guaranteed to be rational given that $\vx$, $\vs$, and the interpolation points are rational. Now one can check in exact arithmetic that $\vS_1,\dots,\vS_{n+1}$ satisfy
\[f-\text{LB}=\sum_{i=1}^{n+1} g_i\vp_i^\T\vS_i\vp_i\]
and verify their positive semidefiniteness via LDL factorization.


The same procedure can be used to certify global lower bounds which are $10^{-18}$ less than the true optimal value for the magnetism polynomial and $10^{-13}$ less than the true optimal value for Caprasse's polynomial.

\section{Conclusions}\label{sec:conc}

Several approaches have been proposed to mitigate the computational issues associated with the semidefinite programming representation of SOS polynomials. These include exploiting sparsity \cite{sparsepop} or symmetry \cite{GatermannParrilo2004}, and replacing the semidefinite programming hierarchies with linear and second-order cone programming hierarchies \cite{AhmadiMajumdar2014,Kuangetal2017}. Our approach of combining non-symmetric conic optimization algorithms and polynomial interpolants also appears to be a very promising and competitive alternative to the conventional SDP-based approach in terms of stability and efficiency. Furthermore, these improvements are achieved without resorting to approximations of the SOS cone or assumptions of sparsity or symmetry. We emphasize that our approach can also be used in conjunction with the techniques that exploit sparsity and symmetry in SOS optimization.

Our approach is particularly suited for problems that require polynomials of high degree. The use of high-degree polynomials is especially relevant in problems involving polynomial or rational function approximations of non-polynomial functions and in data-driven optimization, where the interpolant basis representation is the most natural representation. In particular, with our proposed approach, optimization models involving arbitrary smooth functions (which can be uniformly approximated with polynomials up to any accuracy) can be solved approximately without the explicit construction of the approximating high-degree polynomials.

While the good conditioning of the interpolant basis representation in SOS optimization using semidefinite programming was established in \cite{Papp2017}, and numerically confirmed at least in the univariate case, it has been unclear whether it is possible to optimize efficiently over the cone of SOS interpolants and its dual, circumventing the dense semidefinite programs used in the earlier paper.

The primary results of this work are that the interpolant basis representation also allows for simple and efficient computation of the gradient and Hessian of the logarithmic barrier function of the dual SOS cone in the multivariate and weighted cases as well as in the univariate case, and that with this tractable barrier function, we can solve SOS optimization problems without need for semidefinite programming formulations. The optimal solution of the semidefinite program and the associated SOS decompositions can still be recovered from the optimal solution of the SOS optimization problem with little additional effort.

The numerical results indicate that with Chebyshev and Padua points in the univariate and bivariate cases, and with approximate Fekete points in the multivariate case, the numerical performance of the proposed approach matches the theoretical predictions. Our approach is increasingly favorable to the conventional SDP-based approach as the degree increases; moreover, the asymptotic speedup is also an increasing function of the number of arguments of the polynomials.

\section*{Acknowledgments}

The authors would like to thank the Associate Editor and the Referees for their constructive feedback. 
We also thank Madhu Kiran Chowdary Kolli for his helpful comments on the presentation of the material.

\bibliographystyle{siamplain}
\bibliography{interpolants_siamart}

\bigskip
\appendix

\section{Review of LHSCBs}\label{sec:LHSCB}

In this appendix, we provide a brief review of notions that are central to interior-point method theory. Our presentation is based on the textbook \cite{Renegar2001}.

Let $\cK\subset\R^n$ be a proper cone, and let $\cK^\circ$ denote its interior. Throughout this appendix, we consider a twice continuously differentiable function $F:\cK^\circ\to\R$. Let $g$ and $H$ denote the gradient and Hessian of $F$. We assume that $H(\vx)$ is positive definite for all $\vx\in\cK^\circ$. For any $\vx\in\cK^\circ$, the \emph{local norm} at $\vx$ is defined as $\vv\mapsto\|H(\vx)^{1/2}\vv\|$. Let $\cB_\vx(\vu,r)\defeq\{\vv\in\R^n\,|\;\|H(\vx)^{1/2}(\vv-\vu)\|<r\}$ denote the open ball of radius $r>0$ centered at $\vu\in\R^n$ with respect to the local norm at $\vx$. The function $F$ is said to be \emph{self-concordant} if for all $\vx\in\cK^\circ$, one has $\cB_\vx(\vx,1)\subset\cK^\circ$, and for all $\vv\neq 0$ and $\vu\in\cB_\vx(\vx,1)$, one has
\begin{equation*}
1-\|H(\vx)^{1/2}(\vu-\vx)\|\leq\frac{\|H(\vu)^{1/2}\vv\|}{\|H(\vx)^{1/2}\vv\|}\leq\frac{1}{1-\|H(\vx)^{1/2}(\vu-\vx)\|}.
\end{equation*}
The function $F$ is said to be a \emph{logarithmically homogeneous self-concordant barrier} (LHSCB) if it is self-concordant and satisfies the following conditions:
\begin{enumerate}
\item[i.] $\nu\defeq\sup_{\vx\in\cK^\circ} \|H(\vx)^{-1/2}g(\vx)\|^2$ is finite, and
\item[ii.] $F(t\vx)=F(\vx)-\nu\ln t$ for all $\vx\in \cK^\circ$ and $t>0$.
\end{enumerate}
The quantity $\nu$ is called the \emph{barrier parameter} of $F$.

\bigskip
\section{Omitted Proofs}\label{sec:proofs}

\begin{proof}[Proof of Proposition~\ref{thm:WSOSprops}]
The cone $\Sigma_{n,2\vd}^\vg\subset\cV_{n,2\vd}^\vg$ is always convex and has nonempty interior. It can be shown using Corollary~9.1.3 in \cite{Rockafellar1997} that $\Sigma_{n,2\vd}^\vg$ is also closed whenever the system~\eqref{eq:WSOSprops} is infeasible. In the remainder of the proof, we show that $\Sigma_{n,2\vd}^\vg$ is pointed if and only if the system~\eqref{eq:WSOSprops} is infeasible. Let $d_{\max}=\max_{i=1,\ldots,m}d_i$. Suppose $\Sigma_{n,2\vd}^\vg$ is not pointed. Then there exists a nonzero polynomial $\ell$ such that $\ell,-\ell\in\Sigma_{n,2\vd}^\vg$. Let $s_i^+,s_i^-\in\Sigma_{n,2d_i}$ be such that $\ell=\sum_{i=1}^m g_is_i^+$ and $-\ell=\sum_{i=1}^m g_is_i^-$. Let $s_i\defeq s_i^+ + s_i^-$ for $i=1,\ldots,m$. It is clear that $\sum_{i=1}^m g_is_i=0$ and $s_i\in\Sigma_{n,2d_i}$ for $i=1,\ldots,m$. Furthermore, we cannot have $s_i^+=s_i^-=0$ for all $i=1,\ldots,m$ because $\ell$ is nonzero. Then $\sum_{i=1}^m s_i=\sum_{i=1}^m (s_i^+ + s_i^-)\neq 0$ because $\Sigma_{n,2d_{\max}}$ is pointed. For the converse, suppose there exist polynomials $s_1\in\Sigma_{n,2d_1},\ldots,s_m\in\Sigma_{n,2d_m}$ such that $\sum_{i=1}^m g_is_i=0$ and $\sum_{i=1}^m s_i\neq 0$. Assume without loss of generality that $s_1$ is nonzero. Then $g_1s_1$ is nonzero and belongs to $\Sigma_{n,2\vd}^\vg$. Furthermore, $-g_1s_1=\sum_{i=2}^m g_is_i\in\Sigma_{n,2\vd}^\vg$. This shows that $g_1s_1$ belongs to the lineality space of $\Sigma_{n,2\vd}^\vg$, and hence $\Sigma_{n,2\vd}^\vg$ is not pointed.
\end{proof}

\end{document}